\font\german = eufm10 scaled\magstep1
\font\Cp = msbm10
\newcommand{\Ccc}{\hbox{\Cp C}}
\newcommand{\Nnn}{\hbox{\Cp N}}
\newcommand{\Ppp}{\hbox{\Cp P}}
\newcommand{\Zzz}{\hbox{\Cp Z}}
\newcommand{\Ssss}{\hbox{\german S}}
\newcommand{\type}{\hbox{\rm type}}
\newcommand{\qed}{\mbox{$\Box$}\vspace{\baselineskip}}
\newenvironment{proof}{\noindent {\bf Proof:}}
                      {{\qed}}
\newenvironment{proof_special}{\noindent {\bf Proof:}}
                              {\vspace{-2mm}}
\newtheorem{theorem}{Theorem}[section]
\newtheorem{proposition}[theorem]{Proposition}
\newtheorem{lemma}[theorem]{Lemma}
\newtheorem{definition}[theorem]{Definition}
\newtheorem{corollary}[theorem]{Corollary}
\newtheorem{example}[theorem]{Example}
\newcommand{\hz}{\hat{0}}
\newcommand{\ho}{\hat{1}}
\newcommand{\ab}{\av\bv}
\newcommand{\av}{{\bf a}}
\newcommand{\bv}{{\bf b}}
\newcommand{\Des}[1]{\beta\left(#1\right)}
\newcommand{\Desq}[1]{\beta_{q}\!\left(#1\right)}
\newcommand{\qb}[2]{{\left[{#1 \atop #2} \right]}}
\newcommand{\sech}{\mbox{\rm sech}}
\begin{document}

\title{Exponential Dowling Structures\thanks{European
Journal of Combinatorics {\bf 30} (2009), 311--326.}}

\author{{\sc Richard EHRENBORG and Margaret A.\ READDY}}

\date{}
%%\date{{\sc \today}}

\maketitle

%%%%%%%%%%%%%%%%%%%%%%%%%%%%%%%%%%%%%%%%%%%%%%%%%%%%%%%

\begin{abstract}
The notion of exponential Dowling structures
is introduced, generalizing Stanley's original
theory of exponential structures.
Enumerative theory is developed to determine the
M\"obius function of exponential Dowling structures,
including a restriction
of these structures
to elements whose types satisfy a semigroup condition.
Stanley's study of permutations associated with
exponential structures leads to a similar vein of study for
exponential Dowling structures.
In particular, for the extended $r$-divisible partition lattice
we show the M\"obius function is, up to a sign, the number of permutations
in the symmetric group on $rn+k$ elements having descent
set $\{r, 2r, \ldots, nr\}$.
Using Wachs' original $EL$-labeling of the $r$-divisible partition lattice,
the extended $r$-divisible partition lattice is shown to be
$EL$-shellable.  
\end{abstract}

\section{Introduction}
\label{section_introduction}
\setcounter{equation}{0}

Stanley introduced 
the notion of exponential structures,
that is,
a family of posets that have
the partition lattice $\Pi_n$ as the
archetype~\cite{Stanley_e_s, Stanley_EC_II}.
His original motivation was 
to explain certain permutation phenomena.
His theory ended up inspiring 
many mathematicians
to study the partition lattice and other exponential structures
from enumerative, representation theoretic and homological perspectives.

For example, Stanley studied the
$r$-divisible partition lattice $\Pi_{n}^{r}$
and computed its
M\"obius numbers~\cite{Stanley_e_s}.
Calderbank, Hanlon and Robinson~\cite{Calderbank_Hanlon_Robinson}
derived plethystic formulas in order
to determine
the character of the representation of the symmetric group on its top
homology, while
Wachs determined the homotopy type,
gave explicit bases for the homology and cohomology
and 
studied the $\Ssss_{n}$ action on the top homology~\cite{Wachs_1}.
For the poset of partitions
with block sizes divisible by~$r$ and having
cardinality at least $rk$, 
a similar array of questions have been considered
by 
Bj\"orner and Wachs, Browdy, 
Linusson, Sundaram  and 
Wachs~\cite{Bjorner_Wachs_nonpure, Browdy,
Linusson, Sundaram_applications_Hopf,
Wachs_2}.
Other related work can be found
in~\cite{Bjorner_Lovasz, Bjorner_Welker, 
Gottlieb_Wachs, Sundaram_homology, Sundaram_Wachs, Welker},
as well as work of
Sagan~\cite{Sagan}, who showed certain examples of exponential structures
are $CL$-shellable.

In this paper
we extend Stanley's notion of exponential structures
to that 
of  {\em exponential Dowling 
structures}.
The prototypical example is
the Dowling lattice~\cite{Dowling}.
It can most easily be viewed
as the intersection lattice of the
complex hyperplane arrangement
in~(\ref{equation_complex_hyperplane_arrangement}).
See Section~\ref{section_Dowling_lattice} for a review 
of the Dowling
lattice.

In Section~\ref{section_Dowling_exponential_structures}
we introduce exponential Dowling structures.
We derive the compositional formula 
for exponential Dowling structures
analogous 
to Stanley's theorem
on the compositional formula
for exponential structures~\cite{Stanley_e_s}.
As an application, we give the generating function for the
M\"obius  numbers of an exponential Dowling structure.

An important method to generate new exponential Dowling structures
from old ones is given in
Example~\ref{example_r_k}.
Loosely speaking,
in this new structure
an
$r$-divisibility condition holds for the ``non-zero blocks''
and the cardinality of the ``zero block'' satisfies
the more general condition of being greater
than or equal to $k$ and congruent to $k$ modulo $r$.
We will return to many important special cases of this example
in later sections.

In Section~\ref{section_Mobius_function}
we consider restricted forms
of both exponential and
exponential Dowling structures.
In the case the exponential Dowling structure
is restricted to elements whose
type satisfies a semigroup condition,
the generating function for the M\"obius function of this poset
is particularly elegant.
See Corollary~\ref{corollary_semigroup}
and Proposition~\ref{proposition_M}.
When the blocks have even size, the
generating function is nicely expressed
in terms of the hyperbolic functions.
See Corollary~\ref{corollary_hyperbolic}.

In Section~\ref{section_permutations}
we continue to develop the connection
between permutations and structures
first studied by Stanley
in the case of exponential structures.
In particular we consider 
the lattice
$\Pi_{m}^{r,j}$,
an extension
of the
$r$-divisible partition lattice $\Pi_{m}^{r}$.
In Section~\ref{section_EL} we verify that
Wachs' $EL$-labeling of the $r$-divisible
partition lattice $\Pi_{m}^{r}$ naturally extends to
the new lattice $\Pi_{m}^{r,j}$.

We end with remarks and open questions regarding
further exponential Dowling structures and their connections
with permutation statistics.

\section{The Dowling lattice}
\label{section_Dowling_lattice}
\setcounter{equation}{0}

Let $G$ be a finite group of order $s$.
The {\em Dowling lattice} $L_{n}(G) = L_{n}$
has the following combinatorial description.
For the original formulation, 
see Dowling's paper~\cite{Dowling}.
Define an {\em enriched block} $\widetilde{B} = (B,f)$ to be
a non-empty subset $B$ of $\{1,\ldots,n\}$ and
a function $f : B \longrightarrow G$.
Two enriched blocks $\widetilde{B} = (B,f)$ and
$\widetilde{C} = (C,g)$ are said to be equivalent if
$B = C$ and the functions $f$ and $g$ differ only by a multiplicative
scalar, that is,
there exists $\alpha \in G$ such that
$f(b) =  g(b) \cdot \alpha$ for all $b$ in $B$.
Hence there are only $s^{|B| - 1}$ possible ways to enrich a
non-empty set~$B$, up to equivalence.
Let $\widetilde{B} = (B,f)$ and $\widetilde{C} = (C,g)$
be two disjoint enriched blocks and
let~$\alpha$ be an element in~$G$. 
We can define a function~$h$ on the block $B \cup C$
by
$$ h(b) = \left\{ \begin{array}{c c l}
       f(b)   & \mbox{ if } & b \in B, \\
     \alpha \cdot g(b) & \mbox{ if } & b \in C.
                  \end{array} \right.  $$
Since the group element
$\alpha$ can be chosen in $s$ possible ways,
there are $s$ possible
ways to merge two enriched blocks.

For $E$ a subset of $\{1, \ldots, n\}$,
an {\em  enriched partition}
$\widetilde{\pi} = \{\widetilde{B}_{1}, \ldots, \widetilde{B}_{m}\}$
on the set $E$
is a partition $\pi = \{B_{1}, \ldots, B_{m}\}$ of $E$, 
where each block $B_i$
is enriched with a function $f_{i}$.
The elements of the Dowling lattice~$L_{n}$
are the collection
$$   L_{n}
   =
     \left\{ (\widetilde{\pi}, Z) 
            \:\: : \:\:  Z \subseteq \{1, \ldots, n\}
			\mbox{ and }
                       \widetilde{\pi} 
                      \mbox{ is an enriched partition of } 
 \overline{Z} = \{1, \ldots, n\} - Z  \right\}   .  $$
The set $Z$ is called the {\em zero block.}
Define the cover relation on
$L_{n}$ by the following two relations:
$$  \begin{array}{r c l}
 (\{\widetilde{B}_1, \widetilde{B}_2, \ldots, \widetilde{B}_m\}, Z)
    & \prec &
  (\{\widetilde{B}_2, \ldots, \widetilde{B}_m\}, Z \cup B_1) , \\
 (\{\widetilde{B}_1, \widetilde{B}_2, \ldots, \widetilde{B}_m\}, Z)
    & \prec &
  (\{\widetilde{B}_1 \cup \widetilde{B}_2, \ldots, \widetilde{B}_m\}, Z) .
    \end{array} $$
The first relation says that a block is allowed 
to merge with the zero set.
The second relation says that two blocks are allowed
to be merged together.
The minimal element $\hz$ corresponds to the partition
having all singleton blocks and empty zero block,
while the maximal element $\ho$ corresponds to the partition
where all the elements lie in the zero block.
Observe that the Dowling lattice $L_{n}$ is
graded of rank $n$.

When the group $G$ is the cyclic group of order $s$,
that is, $\Zzz_{s}$,
the Dowling lattice has the following geometric description.
Let $\zeta$ be a
primitive $s$th root of unity.
The {\em Dowling lattice}
$L_{n}(\Zzz_{s})$ is the intersection lattice of
the complex hyperplane arrangement
\begin{equation}
   \left\{
   \begin{array}{c c c l}
  z_i & = & \zeta^{h} \cdot z_j &
        \mbox{ for } 1 \leq i < j \leq n
        \mbox{ and } 0 \leq h \leq s-1, \\
  z_i          & = & 0 &   \mbox{ for } 1 \leq i \leq n ,
   \end{array} 
   \right. 
\label{equation_complex_hyperplane_arrangement}
\end{equation}
that is, the collection of all possible intersections
of these hyperplanes ordered by reverse inclusion.

In the notation we will suppress
the Dowling lattice's
dependency on the group $G$.
Only the order~$s$ of the group will
matter in this paper.
In Section~\ref{section_permutations}
the order $s$ will be specialized to the value $1$.

For an element $x = (\widetilde{\pi}, Z)$
in the Dowling lattice $L_{n}$,
define the {\em type} of $x$ to be
$(b; a_1, a_2, \ldots, a_n)$,
where
$a_i$ is the number of blocks in
$\widetilde{\pi}$
of size $i$ in $x$ and
$b$ is the size of the zero block $Z$.
Observe
that 
the interval $[x,\ho]$ in the Dowling lattice
is isomorphic to $L_{n-\rho(x)}$
where $\rho$ denotes the rank function.
Moreover, 
the interval
$[\hz,x]$
is isomorphic to
$L_b \times \Pi_1^{a_1} \times \cdots \times \Pi_n^{a_n}$,
where
$(b; a_1, a_2, \ldots, a_n)$ is the type of $x$
and
$\Pi_j^{a_j}$ denotes the Cartesian product of
$a_j$ copies of the partition lattice on $j$ elements.

\begin{lemma}
In the Dowling lattice
$L_n$ there are
$$\frac{s^n \cdot n!}
       {s^b \cdot b! \cdot (s \cdot 1!)^{a_1} \cdot a_1! 
        \cdot (s \cdot 2!)^{a_2} \cdot a_2! 
        \cdots (s \cdot n!)^{a_n} \cdot a_n!}
$$
elements of type
$(b; a_1, a_2, \ldots, a_n)$.
\label{lemma_count_type}
\end{lemma}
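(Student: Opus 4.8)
The plan is to count the elements of type $(b;a_1,\ldots,a_n)$ directly, by assembling such an element out of three independent pieces of data and multiplying the number of available choices at each step. Throughout I assume the feasibility condition $b+\sum_{i=1}^{n} i\cdot a_i = n$, since otherwise there are no elements of this type and there is nothing to prove.

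First I would choose the zero block, that is, a $b$-element subset $Z$ of $\{1,\ldots,n\}$; there are ${n \choose b}$ of these. Its complement $\overline{Z}$ has $n-b$ elements, and it remains to equip $\overline{Z}$ with an enriched partition having exactly $a_i$ blocks of size $i$ for each $i$. Second I would choose the underlying, unenriched set partition $\pi$ of $\overline{Z}$ with this block-size distribution; the number of set partitions of an $(n-b)$-element set having $a_i$ blocks of size $i$ for every $i$ is the familiar quantity
$$
\frac{(n-b)!}{\prod_{i=1}^{n}(i!)^{a_i}\cdot a_i!},
$$
the factor $(i!)^{a_i}$ discounting the internal order of a block of size $i$ and the factor $a_i!$ discounting the order among the $a_i$ blocks of size $i$. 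Third I would enrich each block of $\pi$: by the observation recorded just before the lemma, a block of size $i$ admits exactly $s^{i-1}$ enrichments up to equivalence, and the enrichments of different blocks are chosen independently, so this step contributes a factor $\prod_{i=1}^{n}(s^{i-1})^{a_i} = s^{\sum_{i=1}^{n}(i-1)a_i}$.

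Multiplying the three counts and writing ${n \choose b}=n!/(b!\,(n-b)!)$, the number of elements of type $(b;a_1,\ldots,a_n)$ equals
$$
\frac{n!\cdot s^{\sum_{i=1}^{n}(i-1)a_i}}{b!\cdot\prod_{i=1}^{n}(i!)^{a_i}\cdot a_i!}.
$$
All that is left is to rewrite the exponent of $s$: using the feasibility condition,
$$
\sum_{i=1}^{n}(i-1)a_i = \left(\sum_{i=1}^{n} i\,a_i\right) - \sum_{i=1}^{n} a_i = (n-b) - \sum_{i=1}^{n} a_i,
$$
so $s^{\sum_{i}(i-1)a_i}=s^{n}/\left(s^{b}\cdot\prod_{i=1}^{n}s^{a_i}\right)$; pushing the factors $s^{a_i}$ down into the denominator replaces each $(i!)^{a_i}$ by $(s\cdot i!)^{a_i}$ and produces precisely the claimed formula. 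There is no genuine obstacle in this argument: the only point that must not be skipped is the appeal to the previously established count of $s^{i-1}$ inequivalent enrichments of a single block, after which everything is elementary bookkeeping.
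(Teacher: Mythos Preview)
Your proof is correct and follows essentially the same route as the paper: choose the zero block, count the underlying set partitions of its complement with the prescribed block-size multiset, and multiply by the $s^{i-1}$ enrichments per block. The only difference is cosmetic---you carry out the algebraic simplification of the power of $s$ explicitly, whereas the paper leaves that last step to the reader.
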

\begin{proof}
For an element
of type $(b;a_1, a_2, \ldots, a_n)$
in the Dowling lattice $L_n$
we can
choose the $b$ elements in the zero-set in ${n \choose b}$ ways.
The underlying partition on the remaining $n-b$ elements can be
chosen in
$$   \displaystyle
{n-b \choose 
\underbrace{1, \ldots, 1}_{a_1}, 
\underbrace{2, \ldots, 2}_{a_2},
\ldots, 
\underbrace{n, \ldots, n}_{a_n}}
\cdot
\frac{1}{a_1 ! \cdot a_2! \cdots a_n!}      $$
ways.
For a block of size $k$
there are $s^{k-1}$ signings,
so the result follows. 
\end{proof}

\section{Dowling exponential structures}
\label{section_Dowling_exponential_structures}
\setcounter{equation}{0}

Stanley introduced the notion  of an exponential 
structure.  See~\cite{Stanley_e_s} and~\cite[Section 5.5]{Stanley_EC_II}.

\begin{definition}
An {\em exponential structure}
${\bf Q} = (Q_{1}, Q_{2}, \ldots)$ 
is a sequence of posets
such that
\begin{itemize}
\item[(E1)]
The poset $Q_{n}$ has a unique maximal element $\ho$
and every maximal chain in $Q_{n}$ contains
$n$ elements.

\item[(E2)]
For an element $x$ in $Q_n$ of rank $k$,
the interval $[x,\ho]$ is isomorphic to the
partition lattice on $n-k$ elements, $\Pi_{n-k}$.

\item[(E3)]
The lower order ideal generated by $x \in Q_n$
is isomorphic to
$Q_1^{a_1} \times \cdots \times Q_n^{a_n}$.
We call $(a_1, \ldots, a_n)$ the {\em type}
of $x$.

\item[(E4)]
The poset $Q_n$ has $M(n)$ minimal elements.
The sequence $(M(1), M(2), \ldots)$ is called the
{\em denominator sequence}.

\end{itemize}
\end{definition}

Analogous to the definition of an exponential structure,
we introduce
the notion of an exponential Dowling structure.

\begin{definition}
An {\em exponential Dowling structure}
${\bf R} = (R_{0}, R_{1}, \ldots)$ 
associated to an exponential structure 
${\bf Q} = (Q_1, Q_2, \ldots)$
is a sequence of posets
such that
\begin{itemize}
\item[(D1)]
The poset $R_{n}$ has a unique maximal element $\ho$
and every maximal chain in $R_{n}$ contains
$n+1$ elements.

\item[(D2)]
For an element $x \in R_n$,
$[x,\ho] \cong L_{n - \rho(x)}$.

\item[(D3)]
Each element $x$ in $R_{n}$
has a {\em type} $(b; a_1, \ldots, a_n)$ assigned
such that
the lower order ideal generated by $x$ in $R_{n}$
is isomorphic to
$R_b \times Q_1^{a_1} \times \cdots \times Q_n^{a_n}$.

\item[(D4)]
The poset $R_n$ has $N(n)$ minimal elements.
The sequence $(N(0), N(1), \ldots)$ is called the
{\em denominator sequence}.
\end{itemize}
\end{definition}
Observe that $R_{0}$ is the one element poset and thus $N(0) = 1$.
Also note if $x$ has type
$(b; a_1, \ldots, a_n)$
then
$a_{n-b+1} = \cdots = a_{n} = 0$.

Condition {\it (D3)} has a different formulation than
condition {\it (E3)}. The reason is that there could be
cases where the lower order ideal generated by an element
does not factor uniquely into the form
$R_b \times Q_1^{a_1} \times \cdots \times Q_n^{a_n}$.
However,
in the examples we consider the type of an element will be clear.

\begin{proposition}
Let 
${\bf R} = (R_{0}, R_{1}, \ldots)$ 
be an exponential Dowling structure
with associated exponential structure 
${\bf Q} = (Q_1, Q_2, \ldots)$.
The number of elements in $R_{n}$ of
type $(b;a_1, \ldots, a_n)$
is given by
\begin{equation}
\label{equation_pairs}
     \frac{N(n) \cdot s^n \cdot n!}
    {N(b) \cdot s^b \cdot b! \cdot
     (M(1) \cdot s \cdot 1!)^{a_1} \cdot a_1! 
     \cdots
     (M(n) \cdot s \cdot n!)^{a_n} \cdot a_n!
     }
\end{equation}
\end{proposition}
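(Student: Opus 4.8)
The plan is to evaluate the desired number by counting in two ways the pairs $(\hz,x)$ in which $\hz$ is a minimal element of $R_n$ and $x$ is an element of $R_n$ of type $(b;a_1,\ldots,a_n)$ satisfying $\hz\leq x$. Denote by $f$ the number of elements of $R_n$ of the prescribed type, the quantity to be determined. Since $R_n$ is graded of rank $n$ by {\it (D1)}, its minimal elements are precisely the elements of rank $0$, so {\it (D2)} gives $[\hz,\ho]\cong L_{n}$ for every minimal $\hz$.

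First I would count the pairs by choosing $x$ first. The minimal elements of $R_n$ lying below a fixed $x$ of type $(b;a_1,\ldots,a_n)$ are exactly the minimal elements of the lower order ideal that $x$ generates, which by {\it (D3)} is $R_b\times Q_1^{a_1}\times\cdots\times Q_n^{a_n}$; by {\it (D4)} and {\it (E4)} this product has $N(b)\cdot M(1)^{a_1}\cdots M(n)^{a_n}$ minimal elements. Hence the number of pairs equals $f\cdot N(b)\cdot M(1)^{a_1}\cdots M(n)^{a_n}$.

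Next I would count the pairs by choosing $\hz$ first. There are $N(n)$ choices, and for each one an isomorphism $\phi\colon[\hz,\ho]\to L_n$; the number of $x\geq\hz$ of type $(b;a_1,\ldots,a_n)$ then equals the number of elements of $L_n$ of that type, which by Lemma~\ref{lemma_count_type} is
$$\frac{s^{n}\cdot n!}{s^{b}\cdot b!\cdot(s \cdot 1!)^{a_1}\cdot a_1!\cdots(s \cdot n!)^{a_n}\cdot a_n!}.$$
So the number of pairs is $N(n)$ times this expression. Equating the two counts and solving for $f$ produces formula~(\ref{equation_pairs}).

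The one point requiring care is the assertion that $\phi$ carries an element of type $(b;a_1,\ldots,a_n)$ in $R_n$ to one of the same type in $L_n$. To see this, observe that for minimal $\hz\leq x$ the interval $[\hz,x]$ agrees with the interval taken inside the ideal generated by $x$; writing $\hz$ componentwise in $R_b\times Q_1^{a_1}\times\cdots\times Q_n^{a_n}$ and applying {\it (D2)} and {\it (E2)} to each factor shows $[\hz,x]\cong L_b\times\Pi_1^{a_1}\times\cdots\times\Pi_n^{a_n}$, which is exactly the lower order ideal below an element of type $(b;a_1,\ldots,a_n)$ in $L_n$. As noted after the definition of an exponential Dowling structure, the type is read off from this ideal, so $\phi$ preserves types. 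Reconciling these two notions of type is the main (and essentially the only) obstacle; the remainder is the routine arithmetic of dividing one pair count by the other.
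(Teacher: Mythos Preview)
Your argument is the same double-counting of pairs that the paper uses: count pairs $(y,x)$ with $y$ minimal, $x$ of the given type, and $y\le x$, once via the lower order ideal below $x$ and once via the interval $[y,\ho]\cong L_n$ and Lemma~\ref{lemma_count_type}. You are in fact more explicit than the paper about why the isomorphism $[y,\ho]\cong L_n$ should respect types; the paper simply asserts that ``the number of $x$'s is given in Lemma~\ref{lemma_count_type}'' without further comment.
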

\begin{proof}
Consider pairs of elements $(x,y)$
satisfying
$y \leq x$, where
the element $x$ has type $(b;a_1, \ldots, a_n)$
and $y$ is a minimal element of~$R_n$.
We count such pairs in two ways.
The number of minimal elements $y \in R_n$ is given
by~$N(n)$.  Given such a minimal element $y$, the number
of $x$'s is given in Lemma~\ref{lemma_count_type}.
Alternatively,
we wish to count the number of $x$'s.
The number of $y$'s given an element $x$ equals the number
of minimal elements occurring in the lower order ideal generated by
$x$.
This equals the number of minimal elements in
$R_b \times Q_1^{a_1} \times \cdots \times Q_n^{a_n}$,
that is,
$N(b) \cdot M(1)^{a_1} \cdots M(n)^{a_n}$.
Thus the answer is as in~(\ref{equation_pairs}).
\end{proof}

Let ${\bf Q}$ be an exponential structure
and $r$ a positive integer.
Stanley defines the exponential structure ${\bf Q}^{(r)}$
by letting $Q^{(r)}_{n}$ be the subposet $Q_{r n}$ of all
elements $x$ of type $(a_{1}, a_{2}, \ldots)$ where
$a_{i} = 0$ unless $r$ divides $i$.
The denominator sequence of ${\bf Q}^{(r)}$ is
given by
$$   M^{(r)}(n) = \frac{M(r n) \cdot (r n)!}
                       {M(r)^{n} \cdot n! \cdot r!^{n}}  . $$

\begin{example}
{\rm
Let ${\bf R}$ be an exponential Dowling structure associated with the
exponential structure~${\bf Q}$.
Let $r$ be a positive integer and $k$ a non-negative integer.
Let $R^{(r,k)}_{n}$ be the subposet of $R_{r n + k}$ consisting of all
elements $x$ of type $(b; a_{1}, a_{2}, \ldots)$
such that
$b \geq k$, $b \equiv k \bmod r$
and
$a_{i} = 0$ unless $r$ divides $i$.
Then ${\bf R}^{(r,k)} = (R^{(r,k)}_{0}, R^{(r,k)}_{1}, \ldots)$
is an exponential Dowling structure associated with the
exponential structure ${\bf Q}^{(r)}$.
The minimal elements of $R^{(r,k)}_{n}$
are the elements of $R_{r n + k}$ having
types given by $b = k$, $a_{r} = n$
and $a_{i} = 0$ for $i \neq n$.
The denominator sequence of ${\bf R}^{(r,k)}$
is given by
$$   N^{(r,k)}(n) = 
     \frac{N(r n + k) \cdot (r n + k)! \cdot s^{(r-1) \cdot n}}
          {N(k) \cdot k!
            \cdot
           M(r)^{n} \cdot r!^{n} \cdot n! }    .   $$
}
\label{example_r_k}
\end{example}

Stanley~\cite{Stanley_e_s} proved the following structure theorem.

\begin{theorem}
(The Compositional Formula for Exponential Structures)
Let ${\bf Q} = (Q_{1}, Q_{1}, \ldots)$ be an exponential structure
with denominator sequence
$(M(1), M(2), \ldots)$.
Let
$f: \Ppp \rightarrow \Ccc$
and 
$g: \Nnn \rightarrow \Ccc$
be given functions such that $g(0) = 1$.
Define the function
$h : \Nnn \rightarrow \Ccc$ by
\begin{equation}
     h(n) = \sum_{x \in Q_{n}} f(1)^{a_1} \cdot f(2)^{a_2} \cdots
            f(n)^{a_n} \cdot g(a_1 + \cdots + a_n),
\label{equation_f_g_h}
\end{equation}
for $n \geq 1$, where
$\type(x) = (a_1, \ldots, a_n)$,
and $h(0) = 1$.
Define the formal power series $F, G, K \in \Ccc[[x]]$ by
\begin{eqnarray*}
F(x) & = & \sum_{n \geq 1} f(n) \cdot \frac{x^n}{M(n) \cdot n!} \\
G(x) & = & \sum_{n \geq 0} g(n) \cdot \frac{x^n}{n!} \\
H(x) & = & \sum_{n \geq 0} h(n) \cdot \frac{x^n}{M(n) \cdot n!} .
\end{eqnarray*}
Then
$H(x) = G(F(x))$.
\label{theorem_Stanley_compositional_formula}
\end{theorem}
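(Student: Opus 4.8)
The plan is to imitate, for an arbitrary exponential structure, the double-counting argument used in Lemma~\ref{lemma_count_type} and in the Proposition preceding this theorem. The one substantive step is the exponential-structure analogue of Lemma~\ref{lemma_count_type}: the number of elements of $Q_{n}$ of type $(a_{1},\ldots,a_{n})$ is
\[
   \frac{M(n)\cdot n!}
        {(M(1)\cdot 1!)^{a_{1}}\cdot a_{1}! \cdots (M(n)\cdot n!)^{a_{n}}\cdot a_{n}!} .
\]
To prove this, count pairs $(x,y)$ with $y\leq x$, where $x\in Q_{n}$ has the prescribed type and $y$ is a minimal element of $Q_{n}$, in two ways. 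Fixing $y$ first, {\it (E4)} gives $M(n)$ choices, and {\it (E2)} gives $[y,\ho]\cong\Pi_{n}$; under this isomorphism $x$ corresponds to a set partition of $\{1,\ldots,n\}$ whose multiset of block sizes is $\type(x)$, so the number of admissible $x$ is $n!/\bigl(\prod_{i}(i!)^{a_{i}}a_{i}!\bigr)$ (which is $0$, correctly, unless $\sum_{i}i\,a_{i}=n$). Fixing $x$ first, {\it (E3)} and {\it (E4)} show that the number of minimal $y\leq x$ equals the number of minimal elements of $Q_{1}^{a_{1}}\times\cdots\times Q_{n}^{a_{n}}$, that is, $\prod_{i}M(i)^{a_{i}}$. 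Equating the two counts gives the displayed formula.

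Granting the count, the rest is generating-function bookkeeping. Substituting it into~(\ref{equation_f_g_h}), grouping the sum over $x\in Q_{n}$ by type, and dividing by $M(n)\cdot n!$ yields
\[
   \frac{h(n)}{M(n)\cdot n!}
   = \sum_{\sum_{i} i\,a_{i}=n} g\!\left(\textstyle\sum_{i} a_{i}\right)
          \prod_{i} \frac{1}{a_{i}!}\left(\frac{f(i)}{M(i)\cdot i!}\right)^{a_{i}} .
\]
Multiplying by $x^{n}$, using $x^{n}=\prod_{i}(x^{i})^{a_{i}}$ on the region $\sum_{i}i\,a_{i}=n$, and summing over $n\geq 0$ collapses this into a single sum over all finitely supported sequences $(a_{1},a_{2},\ldots)$:
\[
   H(x) = \sum_{(a_{1},a_{2},\ldots)} g\!\left(\textstyle\sum_{i} a_{i}\right)
              \prod_{i} \frac{1}{a_{i}!}\left(\frac{f(i)\,x^{i}}{M(i)\cdot i!}\right)^{a_{i}} .
\]
Sorting by $m=\sum_{i}a_{i}$ and applying the multinomial identity $\sum_{\sum_{i}a_{i}=m}\prod_{i}t_{i}^{a_{i}}/a_{i}! = \bigl(\sum_{i}t_{i}\bigr)^{m}/m!$ with $t_{i}=f(i)x^{i}/(M(i)\cdot i!)$ gives $H(x)=\sum_{m\geq 0}\bigl(g(m)/m!\bigr)F(x)^{m}=G(F(x))$. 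The $m=0$ term, together with $g(0)=1$ and $h(0)=1$, matches the constant terms, and since $F(x)$ has zero constant term each coefficient of $H(x)$ picks up contributions from only finitely many $m$, so the composition is a legitimate identity in $\Ccc[[x]]$.

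The only point that calls for genuine care is the step in the first paragraph where one asserts that the type $(a_{1},\ldots,a_{n})$ assigned to $x$ via {\it (E3)} agrees with the profile of block sizes of the partition representing $x$ in $[y,\ho]\cong\Pi_{n}$. This is a consistency check internal to the axioms: for a minimal $y\leq x$ the interval $[y,x]$ computed in $\Pi_{n}$ factors as $\prod_{B}\Pi_{|B|}$ over the blocks of that partition, while computing $[y,x]$ inside the lower order ideal $Q_{1}^{a_{1}}\times\cdots\times Q_{n}^{a_{n}}$ and applying {\it (E2)} to each factor yields $\Pi_{1}^{a_{1}}\times\cdots\times\Pi_{n}^{a_{n}}$; comparing the two (together with $\sum_{i}i\,a_{i}=n$) forces the block-size multiset to equal $(a_{1},\ldots,a_{n})$. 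Once this is settled, no new idea is needed --- the remaining manipulation is exactly the exponential-structure shadow of the enumeration already carried out in the Proposition preceding the theorem.
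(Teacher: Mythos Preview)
The paper does not actually prove this theorem; it is stated as Stanley's result and cited to~\cite{Stanley_e_s}. Your proof is correct, and it mirrors exactly the method the paper uses to prove the Dowling analogue (Theorem~\ref{theorem_compositional_formula}): first establish the count of elements of a given type by the same double-counting of pairs $(x,y)$ with $y$ minimal (this is the exponential-structure version of Proposition~3.3, which the paper in fact uses implicitly inside the proof of Theorem~\ref{theorem_compositional_formula}), and then do the standard generating-function bookkeeping. The only cosmetic difference is direction: the paper expands $K(x)\cdot G(1/s\cdot F(sx))$ and shows it equals $H(x)$, whereas you start from $H(x)$ and collapse it to $G(F(x))$; the computations are the same read backwards.

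Your extra paragraph checking that the type from axiom~(E3) agrees with the block-size profile under $[y,\ho]\cong\Pi_{n}$ is a nice point of rigor that neither Stanley nor the present paper spells out; it relies implicitly on the direct irreducibility of $\Pi_{i}$ for $i\geq 2$ together with the rank constraint $\sum_{i} i\,a_{i}=n$ to pin down $a_{1}$, which you might mention for completeness.
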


For Dowling structures we have an analogous theorem.

\begin{theorem}
(The Compositional Formula for Exponential Dowling Structures)
Let ${\bf R} = (R_0, R_1, \ldots)$ be an exponential Dowling structure
with denominator sequence
$(N(0), N(1), \ldots)$ and
associated exponential structure
${\bf Q} = (Q_1, Q_2, \ldots)$ with
denominator sequence
$(M(1), M(2), \ldots)$.
Let
$f: \Ppp \rightarrow \Ccc$,
$g: \Nnn \rightarrow \Ccc$
and
$k: \Nnn \rightarrow \Ccc$ 
be given functions.
Define the function
$h : \Nnn \rightarrow \Ccc$ by
\begin{equation}
     h(n) = \sum_{x \in R_n} k(b) \cdot f(1)^{a_1} \cdot f(2)^{a_2} \cdots
            f(n)^{a_n} \cdot g(a_1 + \cdots + a_n),
\label{equation_f_g_h_k}
\end{equation}
for $n \geq 0$, where
$\type(x) = (b;a_1, \ldots, a_n)$.
Define the formal power series $F, G, K, H \in \Ccc[[x]]$ by
\begin{eqnarray*}
F(x) & = & \sum_{n \geq 1} f(n) \cdot \frac{x^n}{M(n) \cdot n!}\\
G(x) & = & \sum_{n \geq 0} g(n) \cdot \frac{x^n}{n!} \\
K(x) & = & \sum_{n \geq 0} k(n) \cdot \frac{x^n}{N(n) \cdot n!} \\
H(x) & = & \sum_{n \geq 0} h(n) \cdot \frac{x^n}{N(n) \cdot n!} 
\end{eqnarray*}
Then
$H(x) = K(x) \cdot G(1/s \cdot F(s \cdot x))$.
\label{theorem_compositional_formula}
\end{theorem}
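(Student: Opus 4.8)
The plan is to mimic the proof of Stanley's Compositional Formula (Theorem~\ref{theorem_Stanley_compositional_formula}), but tracking the extra zero-block data via the function $k$ and accounting for the factor of~$s$ that appears in the Dowling count. The starting point is the formula of the Proposition: an element $x \in R_n$ of type $(b;a_1,\ldots,a_n)$ is one of
$$
\frac{N(n)\cdot s^n\cdot n!}
     {N(b)\cdot s^b\cdot b!\cdot (M(1)\cdot s\cdot 1!)^{a_1}\cdot a_1!\cdots (M(n)\cdot s\cdot n!)^{a_n}\cdot a_n!}
$$
such elements. Substituting this into the defining sum~(\ref{equation_f_g_h_k}) for $h(n)$ and then multiplying by $x^n/(N(n)\cdot n!)$, the factors $N(n)$ and $n!$ cancel, and $H(x)$ becomes a sum over all admissible types $(b;a_1,\ldots,a_n)$ of
$$
\frac{k(b)}{N(b)\cdot s^b\cdot b!}\cdot
\prod_{i\ge 1}\frac{\bigl(f(i)\bigr)^{a_i}}{(M(i)\cdot s\cdot i!)^{a_i}\cdot a_i!}\cdot g\!\left(\textstyle\sum_i a_i\right)\cdot s^n\, x^n,
$$
where $n = b + \sum_i i\,a_i$. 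The key bookkeeping point is that $s^n = s^b\cdot\prod_i (s^i)^{a_i}$, so the $s^b$ in the denominator cancels the zero-block part and each block of size $i$ contributes a factor $s^i/(s\cdot i!\cdot M(i)) = s^{i-1}/(i!\,M(i))$ — exactly the coefficient ratio needed to turn $\sum f(i)x^i/(M(i)\,i!)$ into a series in the variable $s\cdot x$ rescaled by $1/s$.

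Next I would factor the sum. Writing $n = b + m$ where $m = \sum_i i\,a_i$, the sum over $b$ (with $x^b$ attached) produces exactly $K(x) = \sum_b k(b)x^b/(N(b)\,b!)$ once one notes the admissible $b$ range is unrestricted in the general setting. The remaining sum, over all sequences $(a_1,a_2,\ldots)$ of nonnegative integers with finite support, of
$$
g\!\left(\textstyle\sum_i a_i\right)\cdot \prod_{i\ge 1}\frac{1}{a_i!}\left(\frac{f(i)\cdot s^{i-1}\, x^i}{M(i)\cdot i!}\right)^{a_i}
$$
is a standard exponential-type manipulation: summing first over $\ell = \sum_i a_i$ and using the multinomial expansion, it equals $\sum_{\ell\ge 0} g(\ell)\cdot\bigl(\sum_{i\ge 1} f(i)s^{i-1}x^i/(M(i)\,i!)\bigr)^{\ell}/\ell! = G\bigl(\tfrac1s F(s\,x)\bigr)$, since $\sum_i f(i)s^{i-1}x^i/(M(i)\,i!) = \tfrac1s\sum_i f(i)(sx)^i/(M(i)\,i!) = \tfrac1s F(sx)$. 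Combining the two factors gives $H(x) = K(x)\cdot G\bigl(\tfrac1s F(s\,x)\bigr)$.

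The main obstacle is not the algebra of power series — that follows the template of Stanley's argument closely — but rather justifying that the count from the Proposition may be inserted verbatim and that the sum over types genuinely decouples into a "zero-block part" and a "non-zero-block part" with no interaction beyond the constraint $n = b + \sum_i i\,a_i$, which is precisely what the formal variable $x$ absorbs. A secondary subtlety is the handling of the $s$-powers: one must be careful that the paper's convention puts $(M(i)\cdot s\cdot i!)^{a_i}$ in the denominator (one factor of $s$ per block, not per element), so the leftover $s^{i-1}$ per block of size $i$ is what forces the $F(sx)/s$ rescaling rather than a naive $F(x)$. Once these two points are checked, the rest is a routine comparison of coefficients, which I would not grind through in detail.
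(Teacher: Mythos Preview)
Your proposal is correct and follows essentially the same route as the paper: both arguments hinge on the Proposition's type-counting formula and the same bookkeeping of the $s$-powers (one $s$ per block in the denominator, leaving $s^{i-1}$ per block of size $i$, which is exactly what produces $\tfrac{1}{s}F(sx)$). The only cosmetic difference is direction---the paper expands $K(x)\cdot G\bigl(\tfrac{1}{s}F(sx)\bigr)$ and identifies it with $H(x)$, whereas you expand $H(x)$ and factor it---but the algebraic content is identical.
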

\begin{proof_special}
By applying the compositional formula of generating functions
to the (exponential) generating functions
$1/s \cdot F(s x) = \sum_{n\geq 1} f(n)/(M(n) \cdot s) \cdot (s x)^{n}/n!$
and $G(x)$, 
we obtain
\begin{eqnarray*}
G(1/s \cdot F(s x))
  & = &
     \sum_{n \geq 0}
       \sum_{\pi \in \Pi_{n}}
         \prod_{B \in \pi}
             \frac{f(|B|)}{M(|B|) \cdot s}
           \cdot
             g(|\pi|)
           \cdot
	     \frac{(s x)^{n}}{n!}\\
  & = &
     \sum_{n \geq 0}
       \sum_{1 \cdot a_1 + \cdots + n \cdot a_n = n}
     \frac{s^{n} \cdot n!}
     {(M(1) \cdot s \cdot 1!)^{a_1} \cdot a_1!
            \cdots
      (M(n) \cdot s \cdot n!)^{a_n} \cdot a_n!}  
      \\
  &   &         \hspace{50 mm}
     \cdot
      f(1)^{a_1}
            \cdots
      f(n)^{a_n}
            \cdot
      g(a_1 + \cdots + a_n)
            \cdot
      \frac{x^{n}}{n!}  .
\end{eqnarray*}
Multiply this identity with
the (exponential) generating function
$K(x) = \sum_{n \geq 0} k(n)/N(n) \cdot x^{n}/n!$
to obtain
\begin{eqnarray*}
  &   &
K(x) \cdot G(1/s \cdot F(s x)) \\
  & = &
    \sum_{n \geq 0}
      \sum_{b = 0}^{n}
        \sum_{1 \cdot a_1 + \cdots + (n-b) \cdot a_{n-b} = n-b}
            {n \choose b}
          \cdot            
            \frac{k(b)}{N(b)} 
          \\
  &   &          \hspace{35 mm}
       \cdot
     \frac{s^{n-b} \cdot (n-b)!}
     {(M(1) \cdot s \cdot 1!)^{a_1} \cdot a_1!
            \cdots
      (M(n-b) \cdot s \cdot (n-b)!)^{a_{n-b}} \cdot a_{n-b}!}  
             \\
  &   &          \hspace{35 mm}
            \cdot
      f(1)^{a_1}
            \cdots
      f(n)^{a_n}
            \cdot
      g(a_{1} + \cdots + a_{n-b})
            \cdot
      \frac{x^{n}}{n!}    \\
  & = &
    \sum_{n \geq 0}
      \sum_{b = 0}^{n}
        \sum_{1 \cdot a_1 + \cdots + n \cdot a_n = n-b}
     \frac{s^{n} \cdot n!}
     { N(b) \cdot s^{b} \cdot b!
            \cdot
      (M(1) \cdot s \cdot 1!)^{a_1} \cdot a_1!
            \cdots
      (M(n) \cdot s \cdot n!)^{a_n} \cdot a_n!}  
             \\
  &   &         \hspace{50 mm}
            \cdot
      k(b)
            \cdot
      f(1)^{a_1}
            \cdots
      f(n)^{a_n}
            \cdot
      g(a_1 + \cdots + a_n)
            \cdot
      \frac{x^{n}}{n!}    \\
\hspace{10 mm}
  & = &
    \sum_{n \geq 0}
      \sum_{x \in R_{n}}
            k(b)
         \cdot
            {f(1)}^{a_1}
         \cdots
            {f(n)}^{a_n}
         \cdot
            g(a_1 + \cdots + a_n)
         \cdot
            \frac{x^{n}}{N(n) \cdot n!}    
    = H(x)  .
\hspace{30 mm} \qed
\end{eqnarray*}
\end{proof_special}

\begin{example}
{\rm
Let ${\bf R} = (R_0, R_1, \ldots)$ be an exponential Dowling structure
with denominator sequence
$(N(0), N(1), \ldots)$ and
associated exponential structure
${\bf Q} = (Q_1, Q_2, \ldots)$ with
denominator sequence
$(M(1), M(2), \ldots)$.
Let $V_{n}(t)$ be the polynomial
$$   V_{n}(t)
   =
     \sum_{x \in Q_{n}} t^{\rho(x,\ho)}  .  $$
In Example~5.5.6 in~\cite{Stanley_EC_II}
Stanley obtains the generating function
$$   \sum_{n \geq 0} V_{n}(t) \cdot \frac{x^n}{M(n) \cdot n!} 
   =
     \exp\left(
            \sum_{n \geq 1} \frac{x^n}{M(n) \cdot n!} 
         \right)^{t}   ,  $$
by setting $f(n) = 1$ and $g(n) = t^{n}$
in Theorem~\ref{theorem_Stanley_compositional_formula}.
Similarly, defining $W_{n}(t)$ by
$$   W_{n}(t)
   =
     \sum_{x \in R_{n}} t^{\rho(x,\ho)}  ,  $$
we obtain
$$   \sum_{n \geq 0} W_{n}(t) \cdot \frac{x^n}{N(n) \cdot n!} 
   =
         \left(
           \sum_{n \geq 0} \frac{x^n}{N(n) \cdot n!}
         \right)
      \cdot
         {\exp\left(
                  \sum_{n \geq 1} \frac{(s \cdot x)^n}{M(n) \cdot n!} 
              \right)}^{\frac{t}{s}}   ,  $$
by setting $f(n) = 1$, 
$g(n) = t^{n}$ and $k(n) = 1$
in Theorem~\ref{theorem_compositional_formula}.
}
\end{example}

\begin{corollary}
Let ${\bf R} = (R_0, R_1, \ldots)$ be an exponential Dowling structure
with denominator sequence
$(N(0), N(1), \ldots)$ and
associated exponential structure
${\bf Q} = (Q_1, Q_2, \ldots)$ with
denominator sequence
$(M(1), M(2), \ldots)$.
Then the M\"obius function of
the posets
$Q_{n} \cup \{\hz\}$,
respectively
$R_{n} \cup \{\hz\}$,
has the generating function:
\begin{eqnarray}
\sum_{n \geq 1}
    \mu(Q_{n} \cup \{\hz\})
  \cdot
    \frac{x^{n}}{M(n) \cdot n!}
  & = &
    -
\ln\left(
\sum_{n \geq 0}
    \frac{x^{n}}{M(n) \cdot n!}
\right)    , 
\label{equation_one_Mobius} \\
\sum_{n \geq 0}
    \mu(R_{n} \cup \{\hz\})
  \cdot
    \frac{x^{n}}{N(n) \cdot n!}
  & = &
    -
\left(
\sum_{n \geq 0}
    \frac{x^{n}}{N(n) \cdot n!}
\right)
  \cdot
\left(
\sum_{n \geq 0}
    \frac{(s \cdot x)^{n}}{M(n) \cdot n!}
\right)^{-1/s} . 
\label{equation_two_Mobius}
\end{eqnarray}
\label{corollary_Mobius}
\end{corollary}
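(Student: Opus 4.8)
The plan is to evaluate $\mu(Q_{n}\cup\{\hz\})$ and $\mu(R_{n}\cup\{\hz\})$ --- that is, $\mu_{Q_{n}\cup\{\hz\}}(\hz,\ho)$ and $\mu_{R_{n}\cup\{\hz\}}(\hz,\ho)$ --- by applying the defining recursion of the M\"obius function at the \emph{top} element, and then to recognize the answer as an instance of the compositional formula, Theorem~\ref{theorem_Stanley_compositional_formula} in the exponential case and Theorem~\ref{theorem_compositional_formula} in the Dowling case. Since adjoining a new bottom element $\hz$ leaves every interval $[x,\ho]$ with $x\neq\hz$ unchanged, the recursion $\mu(\hz,\ho)=-\sum_{\hz<z\leq\ho}\mu(z,\ho)$ gives $\mu(Q_{n}\cup\{\hz\})=-\sum_{x\in Q_{n}}\mu(x,\ho)$ and $\mu(R_{n}\cup\{\hz\})=-\sum_{x\in R_{n}}\mu(x,\ho)$, where each M\"obius value on the right may be computed in the interval $[x,\ho]$, which by {\it (E2)}, respectively {\it (D2)}, is $\Pi_{n-\rho(x)}$, respectively $L_{n-\rho(x)}$. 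Thus I would use $\mu_{\Pi_{m}}(\hz,\ho)=(-1)^{m-1}(m-1)!$ and the classical evaluation $\mu_{L_{m}}(\hz,\ho)=(-1)^{m}\prod_{i=0}^{m-1}(1+is)$, equivalently $\sum_{m\geq0}\mu_{L_{m}}(\hz,\ho)\,y^{m}/m!=(1+sy)^{-1/s}$ (see~\cite{Dowling}).

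The bridge to the type statistic is the identity $n-\rho(x)=a_{1}+\cdots+a_{n}$, valid for $x$ of type $(a_{1},\dots,a_{n})$ in $Q_{n}$ and for $x$ of type $(b;a_{1},\dots,a_{n})$ in $R_{n}$: by {\it (E3)}, respectively {\it (D3)}, the lower order ideal below $x$ is $Q_{1}^{a_{1}}\times\cdots\times Q_{n}^{a_{n}}$, respectively $R_{b}\times Q_{1}^{a_{1}}\times\cdots\times Q_{n}^{a_{n}}$, hence $\rho(x)=\sum_{i}a_{i}(i-1)$, respectively $b+\sum_{i}a_{i}(i-1)$, and since $\sum_{i}i\,a_{i}=n$, respectively $b+\sum_{i}i\,a_{i}=n$, both give $n-\rho(x)=\sum_{i}a_{i}$. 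Writing $\ell=a_{1}+\cdots+a_{n}$, the two M\"obius sums become $\sum_{x\in Q_{n}}(-1)^{\ell}(\ell-1)!$ and $-\sum_{x\in R_{n}}\mu_{L_{\ell}}(\hz,\ho)$, and these are exactly the quantities $h(n)$ appearing in~(\ref{equation_f_g_h}) and in~(\ref{equation_f_g_h_k}) for the choices $f\equiv1$, $g(\ell)=(-1)^{\ell}(\ell-1)!$ (with $g(0)=1$) in the exponential case, and $f\equiv1$, $k\equiv1$, $g(\ell)=-\mu_{L_{\ell}}(\hz,\ho)$ (so $g(0)=-1$) in the Dowling case.

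It then remains to write down the corresponding series and substitute. In the exponential case $F(x)=\sum_{n\geq1}x^{n}/(M(n)\,n!)$ and $G(x)=1+\sum_{n\geq1}(-1)^{n}x^{n}/n=1-\ln(1+x)$, so Theorem~\ref{theorem_Stanley_compositional_formula} gives $H(x)=1-\ln\bigl(1+F(x)\bigr)=1-\ln\bigl(\sum_{n\geq0}x^{n}/(M(n)\,n!)\bigr)$; since $H(x)=1+\sum_{n\geq1}\mu(Q_{n}\cup\{\hz\})\,x^{n}/(M(n)\,n!)$, subtracting the constant term yields~(\ref{equation_one_Mobius}). In the Dowling case $G(x)=-\sum_{n\geq0}\mu_{L_{n}}(\hz,\ho)\,x^{n}/n!=-(1+sx)^{-1/s}$ and $K(x)=\sum_{n\geq0}x^{n}/(N(n)\,n!)$; since $1+s\cdot\bigl(1/s\cdot F(sx)\bigr)=1+F(sx)=\sum_{n\geq0}(sx)^{n}/(M(n)\,n!)$, Theorem~\ref{theorem_compositional_formula} yields
\[
H(x)=K(x)\cdot G\bigl(1/s\cdot F(sx)\bigr)=-\Bigl(\sum_{n\geq0}\frac{x^{n}}{N(n)\,n!}\Bigr)\cdot\Bigl(\sum_{n\geq0}\frac{(sx)^{n}}{M(n)\,n!}\Bigr)^{-1/s},
\]
and $H(x)=\sum_{n\geq0}\mu(R_{n}\cup\{\hz\})\,x^{n}/(N(n)\,n!)$ gives~(\ref{equation_two_Mobius}). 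The one point needing care --- the main obstacle --- is the rank--type identity together with the boundary cases $\ell=1$ (the one-point lattice $\Pi_{1}$) and $\ell=0$ (the one-point lattice $L_{0}$, occurring only in the Dowling setting), which pin down the conventions $g(0)=1$, respectively $g(0)=-1$, forced by the hypotheses of the compositional formulas; everything else is routine bookkeeping with standard M\"obius numbers and elementary generating functions.
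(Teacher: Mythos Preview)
Your proof is correct and follows essentially the same approach as the paper: expand $\mu(\hz,\ho)$ via the recursion at the top, use {\it (E2)}/{\it (D2)} to identify $\mu(x,\ho)$ with the M\"obius number of $\Pi_{\ell}$, respectively $L_{\ell}$, where $\ell=a_{1}+\cdots+a_{n}$, and then apply the compositional formulas with $f\equiv1$, $k\equiv1$, and $g$ the relevant M\"obius number. The only cosmetic difference is that the paper absorbs the overall minus sign into $h$ rather than into $g$ (taking $g(n)=(-1)^{n-1}(n-1)!$ and $g(n)=\mu_{L_{n}}(\hz,\ho)$), whereas you put it into $g$; your added justification of $n-\rho(x)=\sum_{i}a_{i}$ is a detail the paper leaves implicit.
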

\begin{proof}
Setting $f(n) = 1$ and $g(n) = (-1)^{n-1} \cdot (n-1)!$ and using
that
$$
\mu(Q_{n} \cup \{\hz\})
    =
  -
\sum_{x \in Q_{n}}
    \mu(x,\ho)  
    =
  -
    \sum_{x \in Q_{n}} 
       g(a_1 + \cdots + a_n) ,
$$
equation~(\ref{equation_one_Mobius}) follows
by Theorem~\ref{theorem_Stanley_compositional_formula}.
Similarly, to prove the second
identity~(\ref{equation_two_Mobius}),
redefine $g(n)$ to
be the M\"obius function of the Dowling lattice
$L_{n}$ of rank $n$, that is,
$$  g(n) = (-1)^{n} \cdot 1 \cdot (s+1) \cdot (2 \cdot s + 1)
                            \cdots ((n-1) \cdot s + 1) .  $$
By the binomial theorem we have
$\sum_{n \geq 0} g(n) \frac{x^{n}}{n!} 
  =
  (1 + s \cdot x)^{-1/s}$.
Moreover, let $k(n) = 1$.
Using the recurrence
$$
\mu(R_{n} \cup \{\hz\})
    = 
  -
\sum_{x \in R_{n}}
    \mu(x,\ho)        
    =
\sum_{x \in R_{n}} 
    g(a_1 + \cdots + a_n) ,
$$
and Theorem~\ref{theorem_compositional_formula},
the result follows.
\end{proof}

\section{The M\"obius function of restricted structures}
\label{section_Mobius_function}
\setcounter{equation}{0}

Let $I$ be a subset of the positive integers $\Ppp$.
For an exponential structure
${\bf Q} = (Q_{1}, Q_{2}, \ldots)$
define the {\em restricted poset} $Q_{n}^{I}$ to be all elements $x$
in $Q_{n}$ whose type
$(a_{1}, \ldots, a_{n})$
satisfies $a_{i} > 0$ implies $i \in I$.
For $n \in I$ let $\mu_{I}(n)$ denote the
M\"obius function of the poset $Q_{n}^{I}$ with a $\hz$ adjoined,
that is, the poset
$Q_{n}^{I} \cup \{\hz\}$.
For $n \not\in I$ let $\mu_{I}(n) = 0$.

For any positive integer $n$ define
$$ m_{n} = \sum_{x \in Q_{n}^{I} \cup \{\hz\}} \mu_{I}(\hz,x) 
         =  1 + \sum_{x \in Q_{n}^{I}} \mu_{I}(\hz,x) .  $$
Observe that for $n \in I$ we have 
that $Q_{n}^{I}$ has a maximal element and
hence $m_{n} = 0$.
Especially for $n \not\in I$ we have
the expansion
$$ m_{n} =   1
-
 \sum_{\sum_{i \in I} i \cdot a_{i} = n}
  (-1)^{\sum_{i \in I} a_{i}}
     \frac{M(n) \cdot n!}
    {(M(1) \cdot 1!)^{a_1} \cdot a_1! 
     \cdots
     (M(n) \cdot n!)^{a_n} \cdot a_n!}
  \cdot
  \mu_{I}(1)^{a_1} \cdots  \mu_{I}(n)^{a_n}   .  $$

The following theorem
was inspired by
work of Linusson~\cite{Linusson}.
\begin{theorem}
$$
  \sum_{i \in I} \mu_{I}(i) \frac{x^{i}}{M(i) \cdot i!}
   = 
 -
  \ln \left(
      \sum_{n \geq 0}    \frac{x^{n}}{M(n) \cdot n!} 
   -
      \sum_{n \not\in I}   m_{n} \cdot  \frac{x^{n}}{M(n) \cdot n!} 
      \right)        .
$$
\label{theorem_I}
\end{theorem}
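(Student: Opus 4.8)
The plan is to apply the Compositional Formula for Exponential Structures (Theorem~\ref{theorem_Stanley_compositional_formula}) with a carefully chosen pair of functions $f,g$ that separates the ``allowed'' types (those supported on $I$) from the ``forbidden'' ones, and then to solve for the generating function $\sum_{i \in I} \mu_I(i) x^i/(M(i)\cdot i!)$ by taking a logarithm. Concretely, I would set $f(n) = 1$ for all $n \in \Ppp$, and define $g : \Nnn \to \Ccc$ by a suitable choice so that the right-hand side $h(n)$ of~(\ref{equation_f_g_h}) collapses to something explicit. The natural guess, mimicking the proof of Corollary~\ref{corollary_Mobius}, is to take $g$ to be (a sign times a factorial) on a restricted index set, but since here the restriction lives on the block \emph{sizes} rather than on the \emph{number} of blocks, one cannot encode $I$ purely through $g$. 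Instead I would build the restriction into a modified $f$: put $f(i) = \mu_I(i)/\,(\text{something})$? No — cleaner is to recall that $m_n$ already packages exactly the ``defect'' $1 + \sum_{x\in Q_n^I}\mu_I(\hz,x)$, and to run the compositional formula with $f(n)=1$, $g(n) = (-1)^{n-1}(n-1)!$ but applied to the \emph{restricted} exponential structure $\mathbf{Q}^I$ (whose denominator sequence and whose ``$h$'' can be read off from the $m_n$'s).

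The key computational identity is the one already recorded just before the theorem: for $n \notin I$,
$$
m_n \;=\; 1 \;-\; \sum_{\sum_{i\in I} i\,a_i = n} (-1)^{\sum_{i\in I}a_i}
\frac{M(n)\cdot n!}{\prod_i (M(i)\cdot i!)^{a_i} a_i!}\cdot \prod_{i\in I}\mu_I(i)^{a_i},
$$
and for $n \in I$, $m_n = 0$. Summing the sign-weighted enumeration of $Q^I_n \cup \{\hz\}$ over \emph{all} $n$ (including $n \in I$, where the contribution is $m_n = 0$) and packaging with the denominator sequence, one sees that
$$
\sum_{n\ge 0} m_n \frac{x^n}{M(n)\cdot n!}
\;=\; \sum_{n\ge 0}\frac{x^n}{M(n)\cdot n!}
\;+\; \exp\!\left(\sum_{i\in I}\mu_I(i)\frac{x^i}{M(i)\cdot i!}\right) \;-\; 1,
$$
where the middle exponential is exactly $G(F(x))$ from Theorem~\ref{theorem_Stanley_compositional_formula} with $f(i)=1$ on $I$, $f(i)=0$ off $I$, $g(n)=(-1)^{n-1}(n-1)!$ reassembled via $\exp(-\ln(1-u)) $-type bookkeeping; the ``$+1-1$'' and the bare sum $\sum x^n/(M(n)n!)$ come from the $+1$ in the definition of $m_n$. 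Here I would be careful that the $n=0$ term is handled (so that the constant terms match: $m_0 = 0$ since $0\notin\Ppp$, and both sides have constant term $1$). Isolating the exponential gives
$$
\exp\!\left(\sum_{i\in I}\mu_I(i)\frac{x^i}{M(i)\cdot i!}\right)
= 1 + \sum_{n\ge 0} m_n\frac{x^n}{M(n)\cdot n!} - \sum_{n\ge 0}\frac{x^n}{M(n)\cdot n!}
= \sum_{n\ge 0}\frac{x^n}{M(n)\cdot n!} - \sum_{n\notin I} m_n\frac{x^n}{M(n)\cdot n!},
$$
using $m_n = 0$ for $n \in I$ to restrict the $m_n$-sum to $n \notin I$. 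Taking $\ln$ of both sides yields the claimed formula.

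I expect the main obstacle to be purely bookkeeping: getting the constant terms and the $+1$'s to reconcile correctly, and justifying that the sign-weighted sum $\sum_{x \in Q_n^I \cup \{\hz\}} \mu_I(\hz,x)$ really does expand — via the product structure of the lower order ideals in $Q_n^I$ (inherited from axiom (E3)) together with Lemma-\ref{lemma_count_type}-style enumeration of types restricted to $I$ — into the displayed sum over $\sum_{i\in I} i\,a_i = n$. The substantive point is that $\mu_I(\hz,x)$ for $x$ of type $(a_1,\dots,a_n)$ factors as $\prod_i \mu_I(i)^{a_i}$ because $[\hz,x] \cong \prod_i (Q_i^I \cup\{\hz\})^{a_i}$ up to the adjoined bottoms, so the exponential-formula machinery applies verbatim; once that is in hand, the rest is matching coefficients and invoking Theorem~\ref{theorem_Stanley_compositional_formula} with $f\equiv 1$, $g(n)=(-1)^{n-1}(n-1)!$.
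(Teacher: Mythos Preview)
Your approach is the paper's: expand the relevant exponential as a sum over types supported on $I$, recognize the coefficient of $x^n/(M(n)\,n!)$ in terms of $m_n$, and take a logarithm. But the execution has two linked errors that do not cancel.

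First, the interval $[\hz,x]$ in $Q_n^I\cup\{\hz\}$ is \emph{not} isomorphic to $\prod_i(Q_i^I\cup\{\hz\})^{a_i}$: it is the product $\prod_i(Q_i^I)^{a_i}$ with a \emph{single} $\hz$ adjoined below, not a product of posets each carrying its own bottom. Using the recursion $\mu(\hz,x)=-\sum_{\hz<y\le x}\mu(y,x)$ together with multiplicativity of the M\"obius function over the genuine product $\prod_i(Q_i^I)^{a_i}$ gives
\[
\mu_I(\hz,x)\;=\;(-1)^{1+\sum_i a_i}\prod_{i\in I}\mu_I(i)^{a_i},
\]
with an extra sign you have dropped; this is exactly why the displayed expansion of $m_n$ just before the theorem carries the factor $(-1)^{\sum a_i}$ and an overall minus. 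Consequently the exponential that packages these coefficients is $\exp\bigl(-\sum_{i\in I}\mu_I(i)\,x^i/(M(i)\,i!)\bigr)$, not $\exp(+\sum)$.

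Second, your algebraic step
\[
1 + \sum_{n\ge 0} m_n\,\frac{x^n}{M(n)\,n!} - \sum_{n\ge 0}\frac{x^n}{M(n)\,n!}
\;=\;
\sum_{n\ge 0}\frac{x^n}{M(n)\,n!} - \sum_{n\notin I} m_n\,\frac{x^n}{M(n)\,n!}
\]
is false already at the constant term ($0$ on the left, $1$ on the right); indeed your preceding identity for $\sum m_n\,x^n/(M(n)\,n!)$ fails the same constant-term check. With the sign on the exponential corrected one obtains directly
\[
\exp\!\left(-\sum_{i\in I}\mu_I(i)\,\frac{x^i}{M(i)\,i!}\right)
\;=\;
\sum_{n\ge 0}\frac{x^n}{M(n)\,n!} - \sum_{n\notin I} m_n\,\frac{x^n}{M(n)\,n!},
\]
and taking $\ln$ now yields the theorem with its overall minus sign intact --- which is precisely the paper's computation.
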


\begin{proof}
Expand the product
\begin{eqnarray*}
 - 1 +  \prod_{i \in I} \exp\left(-\mu_{I}(i) \frac{x^{i}}{M(i) \cdot i!}
                           \right)
  & = &
 - 1 +  \prod_{i \in I} 
\left(1 - \mu_{I}(i) \frac{x^{i}}{M(i) \cdot i!}
               + \frac{1}{2} \cdot
                 \left(\mu_{I}(i) \frac{x^{i}}{M(i) \cdot i!}\right)^{2}
               - \cdots    \right)  \\
  & = &
 \sum_{n \geq 1}
 \sum_{\sum_{i \in I} i \cdot a_{i} = n}
   \prod_{i \in I}
   \frac{1}{a_{i}!} \cdot
   \left(- \mu_{I}(i) \frac{x^{i}}{M(i) \cdot i!} \right)^{a_i} \\
  & = &
 \sum_{n \geq 1}
 \sum_{\sum_{i \in I} i \cdot a_{i} = n}
  (-1)^{\sum_{i \in I} a_{i}}
  \cdot
\left(  \mu_{I}(1)^{a_1} \cdots  \mu_{I}(n)^{a_n}  \right) \\
  & \cdot &
   \frac{M(n) \cdot n!}
        {(M(1) \cdot 1!)^{a_1} \cdot a_{1}! \cdots 
         (M(n) \cdot n!)^{a_n} \cdot a_{n}!}
            \frac{x^{n}}{M(n) \cdot n!} \\
  & = &
      \sum_{n \geq 1}   (1 - m_{n}) \cdot  \frac{x^{n}}{M(n) \cdot n!} \\
  & = &
      \sum_{n \geq 1}    \frac{x^{n}}{M(n) \cdot n!} 
   -
      \sum_{n \not\in I}   m_{n} \cdot  \frac{x^{n}}{M(n) \cdot n!} .
\end{eqnarray*}
The result now follows.
\end{proof}

Let $I$ be a subset of the positive integers $\Ppp$
and $J$ be a subset of the natural numbers $\Nnn$.
For an exponential Dowling structure
${\bf R} = (R_{0}, R_{1}, \ldots)$,
define the {\em restricted poset} $R_{n}^{I,J}$ to be all elements $x$
in $R_{n}$ whose type
$(b; a_{1}, \ldots, a_{n})$
satisfies
$b \in J$
and
$a_{i} > 0$ implies $i \in I$.

For $n \in J$ define $\mu_{I,J}(n)$ to be the
M\"obius function of the poset 
$R_{n}^{I,J} \cup \{\hz\}$, that is, 
$R_{n}^{I,J}$ with a minimal element $\hz$ adjoined.
For $n \not\in J$ let $\mu_{I,J}(n) = 0$.
Define for any non-negative integer $n$ 
$$ p_{n} = \sum_{x \in R_{n}^{I,J} \cup \{\hz\}} \mu_{I,J}(\hz,x) 
         = 1 + \sum_{x \in R_{n}^{I,J}} \mu_{I,J}(\hz,x)  . $$
Observe that for $n \in J$ we have $p_{n} = 0$
since the poset $R_{n}^{I,J} \cup \{\hz\}$ has a maximal element.
For $n \not\in J$ we have
\begin{eqnarray*}
 p_{n} 
  & = &
 1
+
 \sum_{(b;a_{1}, \ldots, a_{n})}
(-1)^{a_1 + \cdots + a_{n}}
   \cdot
     \frac{N(n) \cdot s^{n} \cdot n!}
          {N(b) \cdot s^{b} \cdot b!
               \cdot
     (M(1) \cdot s \cdot 1!)^{a_1} \cdot a_1! 
     \cdots
     (M(n) \cdot s \cdot n!)^{a_n} \cdot a_n!} \\
  &   &
\hspace*{50 mm}
  \cdot
  \mu_{I,J}(b)
    \cdot
  \mu_{I}(1)^{a_1} \cdots  \mu_{I}(n)^{a_n}   ,  
\end{eqnarray*}
where the sum is over all types
$(b; a_{1}, \ldots, a_{n})$ where
$b \in J$,
$a_{i} > 0$ implies $i \in I$,
and
$b + \sum_{i \in I} i \cdot a_{i} = n$.

\begin{theorem}
\begin{eqnarray*}
   \sum_{b \in J} \mu_{I,J}(b) \cdot
                      \frac{x^{b}}{N(b) \cdot b!}  
  & = &
\frac{\displaystyle
  -
  \sum_{n \geq 0}
         \frac{x^{n}}{N(n) \cdot n!}
  +
  \sum_{n \not\in J}
       p_{n} \cdot 
         \frac{x^{n}}{N(n) \cdot n!}
}
{\displaystyle
\left(
      \sum_{n \geq 0}    \frac{(s \cdot x)^{n}}{M(n) \cdot n!} 
   -
      \sum_{n \not\in I}   m_{n} \cdot  \frac{(s \cdot x)^{n}}{M(n) \cdot n!} 
 \right)^{1/s}}
\end{eqnarray*}
\label{theorem_I_J}
\end{theorem}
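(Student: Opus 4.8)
The plan is to mimic the proof of Theorem~\ref{theorem_I}, invoking that theorem itself to rewrite the denominator on the right-hand side as a product of exponentials, and then to deduce the asserted identity by comparing coefficients of $x^{n}/(N(n)\cdot n!)$ on the two sides. This is the ``$I,J$-restricted'' analogue of identity~(\ref{equation_two_Mobius}) in Corollary~\ref{corollary_Mobius}, and the mechanism is the same: the denominator is governed by the exponential structure ${\bf Q}$, hence by Theorem~\ref{theorem_I}, while the numerator repackages the defining recursion for $p_{n}$.

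Concretely, first apply Theorem~\ref{theorem_I} with $x$ replaced by $s\cdot x$; since its conclusion exponentiates to
$$
D(x) \; := \; \sum_{n\geq 0}\frac{(s\cdot x)^{n}}{M(n)\cdot n!} - \sum_{n\not\in I} m_{n}\cdot\frac{(s\cdot x)^{n}}{M(n)\cdot n!}
\; = \;
\exp\!\left(-\sum_{i\in I}\mu_{I}(i)\cdot\frac{(s\cdot x)^{i}}{M(i)\cdot i!}\right),
$$
a formal power series with constant term $1$, the denominator $D(x)^{1/s}$ on the right-hand side equals the invertible series $\prod_{i\in I}\exp\!\left(-\frac{1}{s}\cdot\mu_{I}(i)\cdot\frac{(s\cdot x)^{i}}{M(i)\cdot i!}\right)$. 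Next set $P(x):=\left(\sum_{b\in J}\mu_{I,J}(b)\cdot\frac{x^{b}}{N(b)\cdot b!}\right)\cdot D(x)^{1/s}$, expand each exponential factor as $\sum_{a_{i}\geq 0}\frac{1}{a_{i}!}\left(-\frac{\mu_{I}(i)}{s}\right)^{a_{i}}\frac{(s\cdot x)^{i a_{i}}}{(M(i)\cdot i!)^{a_{i}}}$, multiply everything out, and collect the coefficient of $x^{n}/(N(n)\cdot n!)$ in $P(x)$. Grouping the powers of $s$ --- using $b+\sum_{i}i\cdot a_{i}=n$ and $\prod_{i}(M(i)\cdot s\cdot i!)^{a_{i}}=s^{\sum_{i}a_{i}}\cdot\prod_{i}(M(i)\cdot i!)^{a_{i}}$ --- that coefficient turns out to be exactly
$$
\sum_{(b;a_{1},\ldots,a_{n})}(-1)^{a_{1}+\cdots+a_{n}}\cdot
\frac{N(n)\cdot s^{n}\cdot n!}{N(b)\cdot s^{b}\cdot b!\cdot(M(1)\cdot s\cdot 1!)^{a_{1}}\cdot a_{1}!\cdots(M(n)\cdot s\cdot n!)^{a_{n}}\cdot a_{n}!}\cdot\mu_{I,J}(b)\cdot\mu_{I}(1)^{a_{1}}\cdots\mu_{I}(n)^{a_{n}},
$$
summed over the types with $b\in J$, $a_{i}>0\Rightarrow i\in I$ and $b+\sum_{i}i\cdot a_{i}=n$. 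By~(\ref{equation_pairs}) the fraction counts the elements $x\in R_{n}$ of that type, and $(-1)^{a_{1}+\cdots+a_{n}}\cdot\mu_{I,J}(b)\cdot\prod_{i}\mu_{I}(i)^{a_{i}}$ is the common value of $\mu_{I,J}(\hz,x)$ over such $x$ --- this uses condition (D3), so that the lower order ideal of $x$ in $R_{n}^{I,J}$ is $R_{b}^{I,J}\times\prod_{i}(Q_{i}^{I})^{a_{i}}$, together with the standard computation of the M\"obius function of a product with a least element adjoined, and it is precisely how the expansion of $p_{n}$ recorded just before the theorem was derived. Hence the displayed quantity equals $\sum_{x\in R_{n}^{I,J}}\mu_{I,J}(\hz,x)=p_{n}-1$ for every $n\geq 0$: for $n\not\in J$ this is that very expansion, and for $n\in J$ it reads $-1=p_{n}-1$ because $p_{n}=0$. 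Therefore $P(x)=\sum_{n\geq 0}(p_{n}-1)\cdot\frac{x^{n}}{N(n)\cdot n!}=-\sum_{n\geq 0}\frac{x^{n}}{N(n)\cdot n!}+\sum_{n\not\in J}p_{n}\cdot\frac{x^{n}}{N(n)\cdot n!}$, the last equality using $p_{n}=0$ for $n\in J$; dividing through by the invertible series $D(x)^{1/s}$ yields the claimed formula.

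This argument is light on ideas but needs care in one place: keeping track of the factors of $s$. One must verify that the $1/s$ in the exponents, the rescaling $x\mapsto s\cdot x$ in their arguments, and the powers of $s$ built into the Dowling element count~(\ref{equation_pairs}) combine to reproduce \emph{precisely} the fraction appearing in the expansion of $p_{n}$, and that the resulting coefficient identity remains valid at $n\in J$, where it must collapse to $-1$. Everything else follows the template of the proofs of Theorem~\ref{theorem_I} and Corollary~\ref{corollary_Mobius}.
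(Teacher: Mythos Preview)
Your proof is correct and follows essentially the same approach as the paper's: both invoke Theorem~\ref{theorem_I} to identify the denominator as an exponential, multiply by the $J$-sum, expand, and recognize the coefficient of $x^{n}/(N(n)\cdot n!)$ as $p_{n}-1$ via the type expansion and~(\ref{equation_pairs}). The only cosmetic difference is that the paper first computes with the weight $x^{i}/(M(i)\cdot s\cdot i!)$ and substitutes $x\mapsto s\cdot x$ at the end, whereas you build in the scaling $x\mapsto s\cdot x$ and the $1/s$-power from the outset; the underlying argument is identical.
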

\begin{proof}
By similar reasoning as in the proof
of Theorem~\ref{theorem_I}, we have
$$
   \exp\left( -
       \sum_{i \in I} \mu_{I}(i) \cdot
                      \frac{x^{i}}{M(i) \cdot s \cdot i!}
       \right)
  =
   \sum_{(a_{1}, \ldots, a_{n})}
        \prod_{i \in I}
             \frac{1}{a_{i}!} \cdot
               \left(
                     - \frac{\mu_{I}(i) \cdot x^{i}}{M(i) \cdot s \cdot i!}
               \right)^{a_{i}}      .
$$
Multiplying with
$\sum_{b \in J} \mu_{I,J}(b) \cdot
                      \frac{x^{b}}{N(b) \cdot s^{b} \cdot b!}$
and expanding, we obtain
\begin{eqnarray*}
  &    &
\left(\sum_{b \in J} \mu_{I,J}(b) \cdot
                      \frac{x^{b}}{N(b) \cdot s^{b} \cdot b!}
\right)
  \cdot
   \exp\left( -
       \sum_{i \in I} \mu_{I}(i) \cdot
                      \frac{x^{i}}{M(i) \cdot s \cdot i!}
       \right)    \\
  & = &
  \sum_{n \geq 0}
    \sum_{b \in J}
    \sum_{(a_1, \ldots, a_{n-b})}
    \left( \frac{ \mu_{I,J}(b) \cdot x^{b}}{N(b) \cdot s^{b} \cdot b!}
    \right)
   \cdot
        \prod_{i \in I}
             \frac{1}{a_{i}!} \cdot
               \left(
                     - \frac{\mu_{I}(i) \cdot x^{i}}{M(i) \cdot s \cdot i!}
               \right)^{a_{i}}    \\
  & = &
  \sum_{n \geq 0}
       (p_{n} - 1) \cdot 
         \frac{x^{n}}{N(n) \cdot s^{n} \cdot n!}
\end{eqnarray*}
Substituting $x \longmapsto s x$, we can rewrite this
equation as
\begin{eqnarray*}
  &    &
   \sum_{b \in J} \mu_{I,J}(b) \cdot
                      \frac{x^{b}}{N(b) \cdot b!}  \\
  & = &
\left(
  \sum_{n \geq 0}
       (p_{n} - 1) \cdot 
         \frac{x^{n}}{N(n) \cdot n!}
\right)
  \cdot
   \exp\left( \frac{1}{s} \cdot
       \sum_{i \in I} \mu_{I}(i) \cdot
                      \frac{(s \cdot x)^{i}}{M(i) \cdot i!}
       \right)    \\
  & = &
\left(
  \sum_{n \geq 0}
       (p_{n} - 1) \cdot 
         \frac{x^{n}}{N(n) \cdot n!}
\right)
  \cdot
   \exp\left(
       \sum_{i \in I} \mu_{I}(i) \cdot
                      \frac{(s \cdot x)^{i}}{M(i) \cdot i!}
       \right)^{1/s}   
\end{eqnarray*}
By applying
Theorem~\ref{theorem_I}
to the last term, the result follows.
\end{proof}

As a corollary to
Theorems~\ref{theorem_I}
and~\ref{theorem_I_J},
we have
\begin{corollary}
Let $I \subseteq \Ppp$ be a semigroup
and $J \subseteq \Nnn$ such that
$I + J \subseteq J$.
Then the M\"obius function of
the restricted poset
$Q_{n}^{I} \cup \{\hz\}$
and
$R_{n}^{I,J} \cup \{\hz\}$
respectively
has the generating function:
\begin{eqnarray}
\sum_{n \in I}
    \mu_{I}(n)
  \cdot
    \frac{x^{n}}{M(n) \cdot n!}
  & = &
    -
\ln\left(
\sum_{n \in I \cup \{0\}}
    \frac{x^{n}}{M(n) \cdot n!}
\right)    , 
\label{equation_one_corollary_semigroup} \\
\sum_{n \in J}
    \mu_{I,J}(n)
  \cdot
    \frac{x^{n}}{N(n) \cdot n!}
  & = &
    -
\left(
\sum_{n \in J}
    \frac{x^{n}}{N(n) \cdot n!}
\right)
  \cdot
\left(
\sum_{n \in I \cup \{0\}}
    \frac{(s \cdot x)^{n}}{M(n) \cdot n!}
\right)^{-1/s} . 
\label{equation_two_corollary_semigroup} 
\end{eqnarray}
\label{corollary_semigroup}
\end{corollary}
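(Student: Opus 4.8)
The plan is to show that, under the semigroup hypotheses, the restricted posets $Q_n^I$ and $R_n^{I,J}$ are \emph{empty} outside the admissible degrees; this forces the correction terms $m_n$ and $p_n$ occurring in Theorems~\ref{theorem_I} and~\ref{theorem_I_J} to equal $1$ there, and the two generating function identities then drop out after a short bookkeeping simplification.

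First I would treat the exponential case. Every element $x$ of $Q_n$ has a type $(a_1, \ldots, a_n)$ with $1 \cdot a_1 + \cdots + n \cdot a_n = n$, so for $n \geq 1$ at least one $a_i$ is positive. If moreover $x \in Q_n^I$, then every index $i$ with $a_i > 0$ lies in $I$, so $n = \sum_{i \in I} i \cdot a_i$ is a sum of one or more elements of $I$ (counted with multiplicity); since $I$ is a semigroup, such a sum again lies in $I$, whence $n \in I$. Therefore $Q_n^I = \emptyset$ whenever $n \geq 1$ and $n \notin I$, which gives $Q_n^I \cup \{\hz\} = \{\hz\}$ and $m_n = \mu_I(\hz,\hz) = 1$. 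Substituting $m_n = 1$ for $n \notin I$ into Theorem~\ref{theorem_I}, the series inside the logarithm becomes
$$\sum_{n \geq 0} \frac{x^n}{M(n) \cdot n!} - \sum_{n \geq 1,\ n \notin I} \frac{x^n}{M(n) \cdot n!} = \sum_{n \in I \cup \{0\}} \frac{x^n}{M(n) \cdot n!},$$
which is precisely~(\ref{equation_one_corollary_semigroup}).

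Next I would run the analogous argument one level up, now using the hypothesis $I + J \subseteq J$. An element $x \in R_n$ has a type $(b; a_1, \ldots, a_n)$ with $b + 1 \cdot a_1 + \cdots + n \cdot a_n = n$; if $x \in R_n^{I,J}$, then $b \in J$ and every $i$ with $a_i > 0$ lies in $I$. When some $a_i$ is positive, $\sum_i i \cdot a_i \in I$, so $n = b + \sum_i i \cdot a_i \in I + J \subseteq J$; when all $a_i = 0$, simply $n = b \in J$. In either case $n \in J$, so $R_n^{I,J} = \emptyset$ for $n \notin J$, and hence $p_n = 1$ there. Feeding $p_n = 1$ (for $n \notin J$) together with $m_n = 1$ (for $n \notin I$) into Theorem~\ref{theorem_I_J}, the numerator collapses to $-\sum_{n \in J} x^n/(N(n) \cdot n!)$ and the bracketed series in the denominator collapses to $\sum_{n \in I \cup \{0\}} (s \cdot x)^n/(M(n) \cdot n!)$ exactly as in the exponential case; rearranging yields~(\ref{equation_two_corollary_semigroup}).

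I do not expect a genuine obstacle here — the mathematical content is just the elementary remark that a nonempty restricted poset forces its degree into the semigroup. The only points requiring care are the boundary bookkeeping: that every element of $Q_n$ (resp.\ $R_n$) with $n \geq 1$ genuinely has a positive part in its type; the trivial poset $R_0$; the $n = 0$ terms, which behave correctly thanks to the conventions $M(0) = N(0) = 1$; and the sign convention making an empty restricted poset contribute $m_n = 1$ (resp.\ $p_n = 1$) rather than $0$.
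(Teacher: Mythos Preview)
Your proposal is correct and follows essentially the same route as the paper's proof: the paper simply asserts that the semigroup condition forces $Q_n^I = \emptyset$ for $n \notin I$ (hence $m_n = 1$) and that $I + J \subseteq J$ forces $R_n^{I,J} = \emptyset$ for $n \notin J$ (hence $p_n = 1$), then invokes Theorems~\ref{theorem_I} and~\ref{theorem_I_J}. You have spelled out in full the type-counting argument behind these emptiness claims and the bookkeeping that collapses the correction sums, which the paper leaves implicit.
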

\begin{proof}
The semigroup condition implies that
the poset $Q_{n}^{I}$ is empty when $n \not\in I$
and hence $m_{n} = 1$. Similarly,
the other condition implies that
the poset $R_{n}^{I}$ is empty when $n \not\in J$,
so $p_{n} = 1$.
\end{proof}

Let ${\bf D}$ be the Dowling structure consisting of 
the Dowling lattices, that is,
${\bf D} = (L_0, L_1, \ldots)$.

\begin{proposition}
For the exponential Dowling structure
${\bf D}^{(r,k)}$ we have
$$  \sum_{n \geq 0}
            \mu\left(D_{n}^{(r,k)} \cup \{\hz\}\right)
         \cdot
            \frac{x^{r n + k}}{(r n + k)!}  
  =
  \left(
    \sum_{n \geq 0} \frac{x^{r n + k}}{(r n + k)!}
  \right)
\cdot
  \left(
    \sum_{n \geq 0} \frac{(s \cdot x)^{r n}}{(r n)!}
  \right)^{-1/s}                                    .  $$
\label{proposition_M}
\end{proposition}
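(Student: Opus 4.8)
The plan is to recognize Proposition~\ref{proposition_M} as the specialization of Corollary~\ref{corollary_Mobius}, specifically identity~(\ref{equation_two_Mobius}), to the exponential Dowling structure ${\bf D}^{(r,k)}$ constructed in Example~\ref{example_r_k}, followed by one elementary substitution of variables.

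First I would assemble the combinatorial data of ${\bf D}^{(r,k)}$. The structure ${\bf D} = (L_0, L_1, \ldots)$ has the partition lattices $(\Pi_1, \Pi_2, \ldots)$ as its associated exponential structure, since, as noted in Section~\ref{section_Dowling_lattice}, the interval $[\hz, x]$ in $L_n$ is isomorphic to $L_b \times \Pi_1^{a_1} \times \cdots \times \Pi_n^{a_n}$. Both of these families have denominator sequence identically equal to $1$, because $L_n$ and $\Pi_n$ each possess a unique minimal element; that is, $N(n) = M(n) = 1$ for all $n$. Substituting this into the formula of Example~\ref{example_r_k} and into the formula for $M^{(r)}(n)$ recalled just before it, the denominator sequences of ${\bf D}^{(r,k)}$ and of its associated exponential structure ${\bf Q}^{(r)}$ become
$$
   N^{(r,k)}(n) = \frac{(rn+k)! \cdot s^{(r-1)n}}{k! \cdot r!^{n} \cdot n!},
   \qquad
   M^{(r)}(n)   = \frac{(rn)!}{r!^{n} \cdot n!} .
$$
One should also observe that the parameter ``$s$'' attached to ${\bf D}^{(r,k)}$ is still the original group order: for $x \in D_n^{(r,k)} \subseteq L_{rn+k}$ the interval $[x,\ho]$ is the Dowling lattice on the $n - \rho(x)$ non-zero blocks of $x$ over the same group $G$, hence is $L_{n-\rho(x)}$ with parameter $s$.

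Next I would apply identity~(\ref{equation_two_Mobius}) of Corollary~\ref{corollary_Mobius} to ${\bf R} = {\bf D}^{(r,k)}$, written in an auxiliary variable $y$, and then substitute $y = s^{r-1} x^{r} / r!$. The two one-line cancellations
$$
   \frac{y^{n}}{N^{(r,k)}(n) \cdot n!} = \frac{k! \cdot x^{rn}}{(rn+k)!},
   \qquad
   \frac{(s y)^{n}}{M^{(r)}(n) \cdot n!} = \frac{(sx)^{rn}}{(rn)!}
$$
convert~(\ref{equation_two_Mobius}) into
$$
   k! \cdot \sum_{n \geq 0} \mu\!\left(D_n^{(r,k)} \cup \{\hz\}\right) \frac{x^{rn}}{(rn+k)!}
   =
   - k! \left( \sum_{n \geq 0} \frac{x^{rn}}{(rn+k)!} \right)
        \left( \sum_{n \geq 0} \frac{(sx)^{rn}}{(rn)!} \right)^{-1/s} .
$$
Cancelling the nonzero constant $k!$ and multiplying through by $x^{k}$ gives the stated generating function, the overall sign being inherited from Corollary~\ref{corollary_Mobius}.

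There is no conceptual difficulty here: the structural input (that ${\bf D}^{(r,k)}$ is an exponential Dowling structure associated with ${\bf Q}^{(r)}$, with the denominator sequences above) is furnished by Example~\ref{example_r_k}, and the M\"obius generating function is furnished by Corollary~\ref{corollary_Mobius}. The main place an error could slip in is the bookkeeping of the substitution $y = s^{r-1}x^{r}/r!$ --- tracking the powers of $s$, distinguishing the factorials $(rn)!$, $(rn+k)!$ and $r!^{n}$, and carrying along the extra factor $x^{k}$. A secondary point that deserves a careful sentence is the identification $[x,\ho] \cong L_{n-\rho(x)}$ over the \emph{same} group $G$, since it is this that pins the exponent in the final answer to the original $s$.
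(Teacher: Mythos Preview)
Your proposal is correct and follows exactly one of the two routes the paper itself indicates for this proposition (applying Corollary~\ref{corollary_Mobius} to ${\bf D}^{(r,k)}$; the paper also mentions the alternative via Corollary~\ref{corollary_semigroup} with $I = r\cdot\Ppp$ and $J = k + r\cdot\Nnn$). You have simply supplied the substitution and bookkeeping that the paper omits, and your computations of $N^{(r,k)}(n)$, $M^{(r)}(n)$, and the change of variable $y = s^{r-1}x^{r}/r!$ all check out.

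One remark worth flagging explicitly rather than leaving implicit: your derivation (like both corollaries the paper invokes) produces a minus sign on the right-hand side, whereas Proposition~\ref{proposition_M} as printed carries none. This is a typographical inconsistency in the paper itself, not a flaw in your argument; your phrase ``the overall sign being inherited from Corollary~\ref{corollary_Mobius}'' would be clearer if you said outright that the stated proposition is off by a sign.
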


This can be proven from 
Corollary~\ref{corollary_semigroup}
using $I = r \cdot \Ppp$ and $J = k + r \cdot \Nnn$.
This also follows from
Corollary~\ref{corollary_Mobius}
by using the Dowling structure ${\bf D}^{(r,k)}$.

When $r=1$ we have the following corollary.
\begin{corollary}
Let $k \geq 1$.
Then the M\"obius function
of the poset $D_{n}^{(1,k)} \cup \{\hz\}$
is given by
$$   \mu\left(D_{n}^{(1,k)} \cup \{\hz\}\right)
   =
     (-1)^{n} \cdot {{n+k-1} \choose {k-1}} . $$
Furthermore, the M\"obius function does not depend on the order $s$.
\end{corollary}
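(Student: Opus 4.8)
The plan is to specialize Proposition~\ref{proposition_M} to the case $r = 1$ and then extract coefficients. When $r = 1$ the second factor on the right-hand side simplifies dramatically: since $\sum_{n \geq 0} (s \cdot x)^{n}/n! = e^{s x}$, we get
$$\left( \sum_{n \geq 0} \frac{(s \cdot x)^{n}}{n!} \right)^{-1/s} = \left( e^{s x} \right)^{-1/s} = e^{-x} ,$$
and the parameter $s$ disappears entirely. This already establishes the final assertion of the corollary. With this simplification Proposition~\ref{proposition_M} becomes
$$\sum_{n \geq 0} \mu\left(D_{n}^{(1,k)} \cup \{\hz\}\right) \cdot \frac{x^{n+k}}{(n+k)!} = \left( \sum_{n \geq 0} \frac{x^{n+k}}{(n+k)!} \right) \cdot e^{-x} .$$

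Next I would rewrite the first factor using $\sum_{n \geq 0} x^{n+k}/(n+k)! = e^{x} - \sum_{m=0}^{k-1} x^{m}/m!$, so that the right-hand side collapses to $1 - e^{-x} \cdot \sum_{m=0}^{k-1} x^{m}/m!$. Since $k \geq 1$, for each $N \geq k$ the constant term contributes nothing to $[x^{N}]$, and comparing the coefficient of $x^{N}$ on both sides (after multiplying by $N!$) gives, with $N = n + k$,
$$\mu\left(D_{n}^{(1,k)} \cup \{\hz\}\right) = - \sum_{m=0}^{k-1} {N \choose m} \cdot (-1)^{N-m} = (-1)^{N+1} \sum_{m=0}^{k-1} (-1)^{m} {N \choose m} .$$
The last step is the standard partial-sum identity $\sum_{m=0}^{j} (-1)^{m} {N \choose m} = (-1)^{j} {{N-1} \choose {j}}$. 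Applying it with $j = k-1$ and substituting $N = n+k$ yields $\mu\left(D_{n}^{(1,k)} \cup \{\hz\}\right) = (-1)^{n+2k} {{n+k-1} \choose {k-1}} = (-1)^{n} {{n+k-1} \choose {k-1}}$, as desired.

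The argument is entirely routine; the only point requiring a little care is the sign bookkeeping in the last display and the invocation of the alternating binomial identity. If one prefers to bypass that identity, one can instead verify directly that both $e^{-x} \cdot \sum_{n \geq 0} x^{n+k}/(n+k)!$ and the claimed answer $\frac{1}{(k-1)!} \sum_{n \geq 0} (-1)^{n} x^{n+k}/(n! \cdot (n+k))$ have derivative $x^{k-1} e^{-x}/(k-1)!$ and vanish at $x = 0$, hence agree as formal power series.
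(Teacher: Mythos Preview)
Your proof is correct. Both you and the paper start from Proposition~\ref{proposition_M} with $r=1$, observe that the second factor becomes $e^{-x}$ (killing the $s$-dependence), and then extract coefficients; the only difference is in the extraction step. The paper differentiates the identity once, obtaining
\[
\sum_{n \geq 0} \mu\!\left(D_{n}^{(1,k)} \cup \{\hz\}\right)\frac{x^{n+k-1}}{(n+k-1)!} \;=\; \frac{x^{k-1}}{(k-1)!}\,e^{-x},
\]
and then reads off the coefficients directly---which is precisely the alternative you sketch in your final paragraph. Your primary route instead expands $1 - e^{-x}\sum_{m=0}^{k-1}x^{m}/m!$ and invokes the alternating partial-sum identity $\sum_{m=0}^{j}(-1)^{m}\binom{N}{m}=(-1)^{j}\binom{N-1}{j}$. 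Both are equally short; the differentiation trick avoids quoting the binomial identity, while your approach avoids differentiating a product.
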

\begin{proof_special}
We have
$$  \sum_{n \geq 0}
            \mu\left(D_{n}^{(1,k)} \cup \{\hz\}\right)
         \cdot
            \frac{x^{n+k}}{(n+k)!}  
  =
  \left(
    \sum_{n \geq 0} \frac{x^{n+k}}{(n+k)!}
  \right)
\cdot
  \exp(-x)         . $$
Differentiate with respect to $x$ gives
\begin{eqnarray*}
    \sum_{n \geq 0}
            \mu\left(D_{n}^{(1,k)} \cup \{\hz\}\right)
         \cdot
            \frac{x^{n+k-1}}{(n+k-1)!}  
  & = &
  \left(
    \sum_{n \geq 0} \frac{x^{n+k-1}}{(n+k-1)!}
  \right)
\cdot
  \exp(-x)        
-
  \left(
    \sum_{n \geq 0} \frac{x^{n+k}}{(n+k)!}
  \right)
\cdot
  \exp(-x)        \\
  & = &
    \frac{x^{k-1}}{(k-1)!}
\cdot
  \exp(-x)         \\
  & = &
    \sum_{n \geq 0} (-1)^{n} \cdot
           {{n+k-1} \choose {k-1}}
             \cdot \frac{x^{n+k-1}}{(n+k-1)!}  .
\hspace{30 mm} \qed
\end{eqnarray*}
\end{proof_special}

When $r=2$ we can express the generating function
for the M\"obius function
in terms of hyperbolic functions. We have two cases,
depending on whether $k$ is even or odd.
\begin{corollary}
The M\"obius function
of the poset $D_{n}^{(2,k)} \cup \{\hz\}$
is given by
\begin{eqnarray}
    \sum_{n \geq 0}
            \mu\left(D_{n}^{(2,2j)} \cup \{\hz\}\right)
         \cdot
            \frac{x^{2n+2j}}{(2n+2j)!}  
  & = &
  \left(
    \cosh(x)
   - 
    \sum_{i = 0}^{j-1} \frac{x^{2i}}{(2i)!}
  \right)
\cdot
\sech\left( s \cdot x  \right)^{1/s}  
                                              , \\
    \sum_{n \geq 0}
            \mu\left(D_{n}^{(2,2j+1)} \cup \{\hz\}\right)
         \cdot
            \frac{x^{2n+2j+1}}{(2n+2j+1)!}  
  & = &
  \left(
    \sinh(x)
   - 
    \sum_{i = 0}^{j-1} \frac{x^{2i+1}}{(2i+1)!}
  \right)
\cdot
\sech\left( s \cdot x  \right)^{1/s}.
\end{eqnarray}
\label{corollary_hyperbolic}
\end{corollary}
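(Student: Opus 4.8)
The plan is to obtain both identities as immediate specializations of Proposition~\ref{proposition_M}. First I would set $r = 2$ in the formula of that proposition, which gives
$$\sum_{n \geq 0} \mu\left(D_{n}^{(2,k)} \cup \{\hz\}\right) \cdot \frac{x^{2n+k}}{(2n+k)!}
  = \left(\sum_{n \geq 0} \frac{x^{2n+k}}{(2n+k)!}\right) \cdot \left(\sum_{n \geq 0} \frac{(s \cdot x)^{2n}}{(2n)!}\right)^{-1/s}.$$
The crucial point is the hyperbolic identity $\sum_{n \geq 0} (s \cdot x)^{2n}/(2n)! = \cosh(s \cdot x)$, so that the second factor equals $\sech(s \cdot x)^{1/s}$; this is precisely the common factor on the right-hand sides of the two claimed identities, and it carries the only dependence on the group order $s$.

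Next I would split according to the parity of the zero-block size $k$. For $k = 2j$ even, re-indexing the first factor by $m = n+j$ yields $\sum_{n \geq 0} x^{2n+2j}/(2n+2j)! = \sum_{m \geq j} x^{2m}/(2m)! = \cosh(x) - \sum_{i=0}^{j-1} x^{2i}/(2i)!$, which produces the first identity. For $k = 2j+1$ odd, the same re-indexing gives $\sum_{n \geq 0} x^{2n+2j+1}/(2n+2j+1)! = \sinh(x) - \sum_{i=0}^{j-1} x^{2i+1}/(2i+1)!$, producing the second. (Taking $j = 0$ recovers the untruncated $\cosh$ and $\sinh$.)

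Every step here is a routine substitution and re-indexing, so I do not expect a genuine obstacle; the only thing to watch is bookkeeping in the truncation, namely checking that the finite correction sums $\sum_{i=0}^{j-1} x^{2i}/(2i)!$ and $\sum_{i=0}^{j-1} x^{2i+1}/(2i+1)!$ have exactly the right ranges so that the coefficient of $x^{2n+k}/(2n+k)!$ on both sides agrees for every $n \geq 0$. One could alternatively bypass Proposition~\ref{proposition_M} and apply Corollary~\ref{corollary_semigroup} directly with $I = 2 \cdot \Ppp$ and $J = k + 2 \cdot \Nnn$, but the specialization above is the shortest route.
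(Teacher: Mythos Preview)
Your proposal is correct and is exactly the argument the paper intends: the corollary is stated without proof immediately after Proposition~\ref{proposition_M} and its $r=1$ specialization, and your derivation---set $r=2$, recognize $\sum_{n\geq 0}(sx)^{2n}/(2n)!=\cosh(sx)$, and rewrite the first factor as a truncated $\cosh$ or $\sinh$ according to the parity of $k$---is the intended one-line specialization.
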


\section{Permutations and partitions with restricted block sizes}
\label{section_permutations}
\setcounter{equation}{0}

For a permutation $\sigma = \sigma_{1}\sigma_{2}\cdots\sigma_{n}$
in the symmetric group
$\Ssss_{n}$ define the descent set of $\sigma$ to be
the set
$\{ i \: : \: \sigma_{i} < \sigma_{i+1}\}$.
An equivalent notion is 
the {\em descent word} of $\sigma$,
which is the $\ab$-word
$u = u_{1} u_{2} \cdots u_{n-1}$ of degree $n-1$ where
$u_{i} = \av$ if $\sigma_{i} < \sigma_{i+1}$
and
$u_{i} = \bv$ otherwise.
For an $\ab$-word $u$ of length $n-1$
let $\Des{u}$ be the number of permutations $\sigma$
in $\Ssss_{n}$ with descent word $u$.
Similarly, define the $q$-analogue $\Desq{u}$
to be the sum
$$  \Desq{u}
  =
    \sum_{\sigma} q^{\mbox{\scriptsize\rm inv}(\sigma)}   , $$
where the sum ranges over all permutations $\sigma$
in $\Ssss_{n}$ with descent word $u$
and $\mbox{\rm inv}(\sigma)$ is the number of
inversions of $\sigma$.
Let $[n]$ denote $1 + q + \cdots + q^{n-1}$
and $[n]! = [1] \cdot [2] \cdots [n]$.
Finally, let $\qb{n}{k}$ denote the Gaussian coefficient
$[n]!/([k]! \cdot [n-k]!)$.

\begin{lemma}
For two $\ab$-words $u$ and $v$ of degree $n-1$, respectively $m-1$,
the following identity holds:
$$   \qb{n+m}{n}
   \cdot
     \Desq{u} \cdot \Desq{v}
 = 
     \Desq{u \cdot \av \cdot v}
   +
     \Desq{u \cdot \bv \cdot v} .  $$
\end{lemma}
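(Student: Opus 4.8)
The plan is to interpret both sides as counting (weighted by inversions) permutations built by shuffling two smaller permutations. Consider a permutation $\tau$ in $\Ssss_{n+m}$. Its restriction to the first $n$ positions, after order-preserving relabeling by the values $1,\ldots,n$, gives a permutation $\sigma\in\Ssss_n$, and the restriction to the last $m$ positions gives a permutation $\rho\in\Ssss_m$. The key observation is that the descent word of $\tau$ is determined as follows: in positions $1,\ldots,n-1$ it agrees with the descent word of $\sigma$; in positions $n+1,\ldots,n+m-1$ it agrees with the descent word of $\rho$; and at position $n$ it is either $\av$ or $\bv$ depending on whether $\tau_n<\tau_{n+1}$ or not. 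So if $\sigma$ has descent word $u$ and $\rho$ has descent word $v$, then $\tau$ has descent word either $u\cdot\av\cdot v$ or $u\cdot\bv\cdot v$, and every permutation with one of these two descent words arises exactly once this way. This gives the set-level bijection underlying the unsigned identity; the right-hand side $\Desq{u\cdot\av\cdot v}+\Desq{u\cdot\bv\cdot v}$ is precisely the inversion-weighted count over all such $\tau$.

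First I would make the decomposition precise: given $u$ of degree $n-1$ and $v$ of degree $m-1$, set up the map $\tau\mapsto(\sigma,\rho,S)$ where $S$ is the $n$-element subset of $\{1,\ldots,n+m\}$ consisting of the values placed in the first $n$ positions of $\tau$, $\sigma$ is the pattern of $\tau$ on those positions and $\rho$ the pattern on the rest. Conversely, from a choice of $S$ (in $\binom{n+m}{n}$ ways), a $\sigma\in\Ssss_n$ with descent word $u$, and a $\rho\in\Ssss_m$ with descent word $v$, one reconstructs $\tau$ uniquely. Thus permutations $\tau$ with descent word in $\{u\cdot\av\cdot v,\;u\cdot\bv\cdot v\}$ are in bijection with triples $(S,\sigma,\rho)$ as above.

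Next I would track inversions under this bijection. Writing $\tau$ as the shuffle determined by $S$, the inversions of $\tau$ split into three groups: inversions among the first $n$ positions, which equal $\mathrm{inv}(\sigma)$ since the relabeling is order-preserving; inversions among the last $m$ positions, equal to $\mathrm{inv}(\rho)$; and inversions between a first-block position and a last-block position, which depend only on $S$ and not on $\sigma$ or $\rho$. Summing $q^{\mathrm{inv}(\tau)}$ over all triples therefore factors as
$$
\left(\sum_{S}q^{\,\mathrm{cross}(S)}\right)\cdot\Desq{u}\cdot\Desq{v},
$$
where $\mathrm{cross}(S)$ is the number of pairs $(i,j)$ with $i\in S$, $j\notin S$, $i>j$. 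The standard fact that $\sum_{S}q^{\,\mathrm{cross}(S)}=\qb{n+m}{n}$, summing over $n$-subsets $S$ of an $(n+m)$-set, then yields the claimed identity.

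The main obstacle is the bookkeeping in the inversion count: one must verify carefully that the ``cross'' inversions between the two blocks depend only on which values land in the first block and not on their internal arrangement, and that the generating function for $\mathrm{cross}(S)$ over all $n$-subsets is exactly the Gaussian coefficient $\qb{n+m}{n}$ — this is essentially the $q$-binomial theorem / the statistic counting inversions in binary words, but it needs to be invoked correctly with the right convention so that the three contributions to $\mathrm{inv}(\tau)$ are genuinely independent. Setting $q=1$ recovers $\Des{u\cdot\av\cdot v}+\Des{u\cdot\bv\cdot v}=\binom{n+m}{n}\Des{u}\Des{v}$ as a sanity check.
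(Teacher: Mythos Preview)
Your argument is correct. The paper does not actually prove this lemma: it simply states the identity and attributes it to MacMahon as ``the Multiplication Theorem'' \cite[Article~159]{MacMahon}, so there is no proof in the paper to compare against. Your bijective proof via the decomposition $\tau\mapsto(S,\sigma,\rho)$ and the factorization $\mathrm{inv}(\tau)=\mathrm{inv}(\sigma)+\mathrm{inv}(\rho)+\mathrm{cross}(S)$ is exactly the standard modern proof of MacMahon's result; the only point you flag as needing care---that the cross inversions depend on $S$ alone and that $\sum_S q^{\mathrm{cross}(S)}=\qb{n+m}{n}$---is indeed routine, since every pair $(a,b)$ with $a\in S$, $b\notin S$ contributes one cross pair regardless of the internal orderings, and the resulting statistic on $n$-subsets is the usual inversion statistic on binary words with $n$ ones and $m$ zeros.
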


This is  ``the Multiplication Theorem'' due to
MacMahon~\cite[Article~159]{MacMahon}.
Using this identity, we obtain 
the following lemma for Eulerian generating functions.
\begin{lemma}
Let $\left( u_{n} \right)_{n \geq 1}$
and $\left( v_{n} \right)_{n \geq 1}$
be two sequences of $\ab$-words such that
the $n$th word has degree $n-1$.
Then the following Eulerian generating function identity holds:
\begin{eqnarray*}
  &   &
\left(
\sum_{n \geq 1} c_{n} \cdot \Desq{u_{n}} \cdot \frac{x^{n}}{[n]!}
\right)
\cdot
\left(
\sum_{n \geq 1} d_{n} \cdot \Desq{v_{n}} \cdot \frac{x^{n}}{[n]!}
\right) \\
  & = &
\sum_{n \geq 2} \sum_{{i+j=n} \atop {i,j \geq 1}}
       c_{i} \cdot d_{j} \cdot
      \left(\Desq{u_{i} \cdot \av \cdot v_{j}}
          +
            \Desq{u_{i} \cdot \bv \cdot v_{j}}\right)
 \cdot \frac{x^{n}}{[n]!} .
\end{eqnarray*}
\end{lemma}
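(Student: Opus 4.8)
The plan is to expand the left-hand side as a Cauchy product of the two Eulerian generating series and then apply the previous lemma (MacMahon's Multiplication Theorem) term by term. Concretely, I would write
\[
\left(\sum_{i\geq 1} c_i\Desq{u_i}\frac{x^i}{[i]!}\right)
\cdot
\left(\sum_{j\geq 1} d_j\Desq{v_j}\frac{x^j}{[j]!}\right)
=\sum_{n\geq 2}\left(\sum_{{i+j=n}\atop{i,j\geq 1}} c_i d_j\,\Desq{u_i}\,\Desq{v_j}\,\frac{1}{[i]!\,[j]!}\right)x^n,
\]
so the coefficient of $x^n$ on the left is $\sum_{i+j=n} c_i d_j\,\Desq{u_i}\Desq{v_j}/([i]!\,[j]!)$.

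**Next I would** multiply and divide each summand by $[n]!$ and recognize the factor $[n]!/([i]!\,[j]!)$ as the Gaussian coefficient $\qb{n}{i}=\qb{i+j}{i}$. Then the inner sum becomes
\[
\sum_{{i+j=n}\atop{i,j\geq 1}} c_i d_j\,\qb{i+j}{i}\,\Desq{u_i}\Desq{v_j}\cdot\frac{1}{[n]!}.
\]
Now the Multiplication Theorem (the preceding lemma, applied with the words $u_i$ of degree $i-1$ and $v_j$ of degree $j-1$, so that $n-1=(i-1)+1+(j-1)$) gives exactly
\[
\qb{i+j}{i}\,\Desq{u_i}\Desq{v_j}=\Desq{u_i\cdot\av\cdot v_j}+\Desq{u_i\cdot\bv\cdot v_j}.
\]
Substituting this identity into the coefficient of $x^n/[n]!$ yields precisely the right-hand side of the claimed identity, and summing over $n\geq 2$ completes the argument.

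**The only thing to be careful about** is bookkeeping of degrees and index ranges: one must check that the word $u_i\cdot\av\cdot v_j$ (and likewise with $\bv$) genuinely has degree $(i-1)+1+(j-1)=n-1$, matching a permutation in $\Ssss_n$, and that the constraint $i,j\geq 1$ in the Cauchy product corresponds exactly to the hypothesis that the sequences are indexed from $n\geq 1$ with the $n$th word of degree $n-1$; the product series then naturally starts at $n=2$. There is no real obstacle here — this is a routine "multiply two exponential-type generating functions and apply a known coefficient identity" argument — so I expect the main (minor) point to be stating the degree/length conventions cleanly enough that the application of the Multiplication Theorem is unambiguous.
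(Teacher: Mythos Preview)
Your argument is correct and is exactly the approach the paper intends: the paper does not spell out a proof of this lemma but simply states that it follows from MacMahon's Multiplication Theorem, and your Cauchy-product expansion together with the identification $[n]!/([i]!\,[j]!)=\qb{n}{i}$ and termwise application of that theorem is precisely the routine verification implied.
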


Now we obtain the following proposition.
In the special case when $w=\av^{i}$, where
$0 \leq i \leq r-1$, the result is due to 
Stanley~\cite{Stanley_binomial}. See
also~\cite[Section~3.16]{Stanley_EC_I}.

\begin{proposition}
Let $w$ be an $\ab$-word of degree $k-1$.
Then Eulerian generating function for
the descent statistic $\Desq{(\av^{r-1} \bv)^{n} \cdot w}$
is given by
$$  \sum_{n \geq 0}
            (-1)^{n} \cdot
            \Desq{(\av^{r-1} \bv)^{n} \cdot w} \cdot
            \frac{x^{r n + k}}{[r n + k]!}  
  =
  \frac{{\displaystyle
    \sum_{n \geq 0} \Desq{\av^{r n} \cdot w} \cdot
            \frac{x^{r n + k}}{[r n + k]!}  
          }}{{\displaystyle
    \sum_{n \geq 0} \frac{x^{r n}}{[r n]!}
              }}                  .  $$
\label{proposition_permutations}
\end{proposition}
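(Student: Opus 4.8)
The plan is to expand the right-hand side and collapse it using the Eulerian generating function Lemma stated just above. Abbreviate the three series as $A(x)=\sum_{n\geq 0}(-1)^n\Desq{(\av^{r-1}\bv)^n\cdot w}\,x^{rn+k}/[rn+k]!$, $B(x)=\sum_{n\geq 0}\Desq{\av^{rn}\cdot w}\,x^{rn+k}/[rn+k]!$ and $C(x)=\sum_{n\geq 0}x^{rn}/[rn]!$; the assertion is that $A(x)\cdot C(x)=B(x)$. Since the identity permutation is the unique permutation all of whose consecutive comparisons are ascents, $\Desq{\av^{rm-1}}=1$ for every $m\geq 1$, so $C(x)=1+\sum_{m\geq 1}\Desq{\av^{rm-1}}\,x^{rm}/[rm]!$, and therefore $A(x)C(x)=A(x)+\bigl(C(x)-1\bigr)A(x)$.

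Now I would apply the Eulerian generating function Lemma to the product $\bigl(C(x)-1\bigr)\cdot A(x)$, using as the first family of $\ab$-words the words $\av^{rm-1}$ (coefficient $1$, all other coefficients set to zero) and as the second family the words $(\av^{r-1}\bv)^{n}\cdot w$ (coefficient $(-1)^n$). The Lemma converts each pair $\bigl(\av^{rm'-1},\,(\av^{r-1}\bv)^{n'}w\bigr)$ into the two $\ab$-words $\av^{rm'-1}\cdot\av\cdot(\av^{r-1}\bv)^{n'}w=\av^{rm'}(\av^{r-1}\bv)^{n'}w$ and $\av^{rm'-1}\cdot\bv\cdot(\av^{r-1}\bv)^{n'}w=\av^{r(m'-1)}(\av^{r-1}\bv)^{n'+1}w$, the second equality using $\av^{rm'-1}\bv=\av^{r(m'-1)}(\av^{r-1}\bv)$. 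Writing $G_j=\Desq{\av^{r(n-j)}(\av^{r-1}\bv)^{j}w}$ for $0\leq j\leq n$, so that $G_0=\Desq{\av^{rn}w}$ and $G_n=\Desq{(\av^{r-1}\bv)^{n}w}$, the coefficient of $x^{rn+k}/[rn+k]!$ in $\bigl(C(x)-1\bigr)A(x)$ is then $\sum_{n'=0}^{n-1}(-1)^{n'}\bigl(G_{n'}+G_{n'+1}\bigr)$, an alternating sum that telescopes to $G_0-(-1)^{n}G_n$.

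Since the coefficient of $x^{rn+k}/[rn+k]!$ in $A(x)$ alone is $(-1)^{n}\Desq{(\av^{r-1}\bv)^{n}w}=(-1)^{n}G_n$, the coefficient of $x^{rn+k}/[rn+k]!$ in $A(x)C(x)=A(x)+\bigl(C(x)-1\bigr)A(x)$ is $(-1)^{n}G_n+G_0-(-1)^{n}G_n=G_0=\Desq{\av^{rn}w}$, which is exactly the corresponding coefficient of $B(x)$; the $n=0$ case is handled directly, since $\bigl(C(x)-1\bigr)A(x)$ has no $x^{k}$ term. I do not expect a genuine obstacle here: the entire argument is bookkeeping — checking that the two words produced by the Multiplication Theorem really are $\av^{r(n-j)}(\av^{r-1}\bv)^{j}w$ and $\av^{r(n-j-1)}(\av^{r-1}\bv)^{j+1}w$, noticing that rewriting $C(x)-1$ as $\sum_{m\geq 1}\Desq{\av^{rm-1}}x^{rm}/[rm]!$ is precisely what makes the Lemma applicable, and recognizing the resulting sum as telescoping; the low-degree and $n=0$ boundary cases cost only a line. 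Equivalently, one can bypass the Lemma and apply the Multiplication Theorem directly to $u=\av^{r(n-j)-1}$ and $v=(\av^{r-1}\bv)^{j}w$, obtaining the recurrence $G_j+G_{j+1}=\qb{rn+k}{rj+k}\Desq{(\av^{r-1}\bv)^{j}w}$; summing this against $(-1)^j$ then yields the equivalent coefficient form $\Desq{\av^{rn}w}=\sum_{j=0}^{n}(-1)^{j}\qb{rn+k}{rj+k}\Desq{(\av^{r-1}\bv)^{j}w}$ of the claimed identity, which is obtained from it by clearing the $q$-factorials.
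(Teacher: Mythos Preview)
Your argument is correct and is essentially identical to the paper's own proof: the paper likewise writes $C(x)-1=\sum_{m\geq 1}\Desq{\av^{rm-1}}\,x^{rm}/[rm]!$, applies the Eulerian generating function Lemma to $(C(x)-1)\cdot A(x)$, obtains the same two concatenated words $\av^{ri}(\av^{r-1}\bv)^{j}w$ and $\av^{r(i-1)}(\av^{r-1}\bv)^{j+1}w$, telescopes to $\Desq{\av^{rn}w}+(-1)^{n-1}\Desq{(\av^{r-1}\bv)^{n}w}$, and then adds $A(x)$ to both sides. Your alternative coefficient-form identity at the end is a harmless bonus not present in the paper.
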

\begin{proof}
Consider the following product of generating functions:
\begin{eqnarray*}
  &   &
\left(
    \sum_{n \geq 1} 
            \Desq{\av^{r n-1}} \cdot
            \frac{x^{r n}}{[r n]!}
\right)
  \cdot
\left(
    \sum_{n \geq 0} 
            (-1)^{n} \cdot
            \Desq{(\av^{r-1} \bv)^{n} \cdot w} \cdot
            \frac{x^{r n + k}}{[r n + k]!}  
\right) \\
       & = &
\sum_{n \geq 0} 
\left(
\sum_{{i+j = n} \atop {i \geq 1}}
(-1)^{j} \cdot
\left(
      \Desq{\av^{r i} (\av^{r-1} \bv)^{j} \cdot w}
    +
      \Desq{\av^{r (i-1)} (\av^{r-1} \bv)^{j+1} \cdot w}
\right)
\right)
\cdot 
            \frac{x^{r n + k}}{[r n + k]!}   \\
       & = &
\sum_{n \geq 0} 
\left(
      \Desq{\av^{r n} \cdot w}
    +
      (-1)^{n-1} \cdot
      \Desq{(\av^{r-1} \bv)^{n} \cdot w}
\right)
\cdot 
            \frac{x^{r n + k}}{[r n + k]!}   .
\end{eqnarray*}
Now add 
$\sum_{n \geq 0}
            (-1)^{n} \cdot
            \Desq{(\av^{r-1} \bv)^{n} \cdot w} \cdot
            x^{r n + k}/[r n + k]!$
to both sides and the desired identity is established.
\end{proof}

For $r$ a positive integer and $n$ a non-negative 
integer let $m = r n$. Define
the poset $\Pi_{m}^{r}$ to be
the collection of all partitions $\pi$
of the set
$\{1, \ldots, m\}$
such that each block size is divisible by $r$
together with a minimal element $\hz$ adjoined.
This is the well-known and well-studied
{\em $r$-divisible partition lattice}.
See~\cite{Calderbank_Hanlon_Robinson, Sagan, Stanley_e_s, Wachs_1}.
Other restrictions of the partition lattice and the Dowling lattice
can be found in~\cite{Bjorner_Sagan, Gottlieb_I, Gottlieb_II}.

A natural extension of the $r$-divisible partition lattice
is the following.
For $r$ a positive integer, and $n$ and $j$ non-negative integers,
let $m = r n + j$.
Define the poset $\Pi_{m}^{r,j}$ to be
the collection of all partitions $\pi$
of the set
$\{1, \ldots, m\}$
such that
\vspace*{-2 mm}
\begin{itemize}
\item[(i)] a block $B$ of $\pi$ containing the element $m$
must have cardinality at least $j$,
\vspace*{-2 mm}
\item[(ii)] a block $B$ of $\pi$ not containing the element $m$
must have cardinality divisible by $r$,
\end{itemize}
\vspace*{-2 mm}
together with a minimal element $\hz$ adjoined to the poset.
We order all such partitions in the usual way by refinement.
For instance, $\Pi_{m}^{1,1}$ is the classical
partition lattice $\Pi_{m}$ with $\hz$ adjoined.
Observe that the poset $\Pi_{m}^{r,j} -\{\hz\}$
is a filter (upper order ideal) of the partition lattice~$\Pi_{m}$.
Hence~$\Pi_{m}^{r,j}$ is a finite semi-join lattice
and we can conclude that it is a lattice.
The same argument holds for~$\Pi_{m}^{r}$.

By combining Propositions~\ref{proposition_M}
and~\ref{proposition_permutations},
we obtain the next result.
\begin{theorem}
Let $r$ and $k$ be positive integers and $n$ a non-negative 
integer and let $m = r n + k + 1$.
Then the M\"obius function of
the lattice $\Pi_{m}^{r,k+1}$
is given by
the sign $(-1)^{n}$ times
the number of permutations
on $m-1$ elements
with the descent set
$\{r, 2 r, \ldots, n r\}$,
that is,
$$      \mu(\Pi_{m}^{r,k+1})
    =
        (-1)^{n}
      \cdot
        \Des{(\av^{r-1} \bv)^{n} \cdot \av^{k-1}}  .  $$
\label{theorem_m_plus_1}
\end{theorem}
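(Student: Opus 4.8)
The strategy is to identify both sides of the claimed identity by computing two exponential generating functions and matching coefficients. First I would apply Proposition~\ref{proposition_M} to the Dowling structure ${\bf D}^{(r,k)}$ with the group order specialized to $s = 1$ (as announced at the end of Section~\ref{section_Dowling_lattice}). When $s = 1$ the Dowling lattice $L_m$ degenerates: each block carries a trivial signing, the zero block plays the role of a distinguished block, and one checks that $D_m^{(r,k)} \cup \{\hz\}$ becomes precisely the lattice $\Pi_{rn+k}^{r,k+1}$ with $m = rn+k+1$ (the zero block of size $b \geq k$, $b \equiv k \bmod r$ being the block containing the extra element $m$, which must have cardinality at least $k$; the non-zero blocks being $r$-divisible). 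This identification of posets is the conceptual crux and I would spell it out carefully. Granting it, Proposition~\ref{proposition_M} at $s=1$ gives
$$
\sum_{n \geq 0} \mu\!\left(\Pi_{rn+k+1}^{r,k+1}\right) \cdot \frac{x^{rn+k}}{(rn+k)!}
=
\left( \sum_{n \geq 0} \frac{x^{rn+k}}{(rn+k)!} \right)
\cdot
\left( \sum_{n \geq 0} \frac{x^{rn}}{(rn)!} \right)^{-1}.
$$

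Next I would invoke Proposition~\ref{proposition_permutations} with $q = 1$ and with the $\ab$-word $w = \av^{k-1}$ of degree $k-1$. Since at $q=1$ we have $[rn+k]! = (rn+k)!$ and $\Desq{\cdot}$ reduces to $\Des{\cdot}$, and since $\Des{\av^{rn} \cdot \av^{k-1}} = \Des{\av^{rn+k-1}} = 1$ (the only permutation of $rn+k$ elements with empty descent set is the identity), Proposition~\ref{proposition_permutations} yields
$$
\sum_{n \geq 0} (-1)^n \cdot \Des{(\av^{r-1}\bv)^n \cdot \av^{k-1}} \cdot \frac{x^{rn+k}}{(rn+k)!}
=
\frac{\displaystyle \sum_{n \geq 0} \frac{x^{rn+k}}{(rn+k)!}}{\displaystyle \sum_{n \geq 0} \frac{x^{rn}}{(rn)!}}.
$$
The right-hand sides of the two displayed identities are visibly identical, so their left-hand sides agree coefficientwise, giving $\mu(\Pi_{rn+k+1}^{r,k+1}) = (-1)^n \cdot \Des{(\av^{r-1}\bv)^n \cdot \av^{k-1}}$. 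Finally I would observe that the descent word $(\av^{r-1}\bv)^n \cdot \av^{k-1}$ has degree $rn + k - 1 = m - 2$, hence encodes permutations on $m-1$ elements, and that its set of $\bv$'s sits in positions $r, 2r, \ldots, nr$, which is exactly the descent set described in the statement.

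**Main obstacle.** The routine part is the generating-function bookkeeping; the genuine work is the first step — verifying that $D_m^{(r,k)} \cup \{\hz\}$ really is isomorphic (as a poset, or at least has the same M\"obius number) to $\Pi_{rn+k+1}^{r,k+1}$ when $s=1$. One must be careful about the off-by-one in the indexing ($k$ versus $k+1$, and $m = rn+k$ versus $m = rn+k+1$): in ${\bf D}^{(r,k)}$ the ambient set has $rn+k$ points and the zero block has size congruent to $k$ mod $r$ with $b \geq k$, whereas in $\Pi_m^{r,k+1}$ the ambient set has $m = rn+k+1$ points and the block through $m$ has size $\geq k+1$. The reconciliation is that adjoining the point $m$ to the zero block shifts its size by one and its congruence class stays controlled; equivalently, one can simply match the two generating-function identities directly without ever naming the isomorphism, since Proposition~\ref{proposition_M} already delivers $\mu(D_n^{(r,k)} \cup \{\hz\})$ in closed form and the content of Theorem~\ref{theorem_m_plus_1} is the assertion that $D_n^{(r,k)} \cup \{\hz\} = \Pi_{rn+k+1}^{r,k+1}$. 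I would therefore present the poset identification as a short explicit lemma (or remark), noting that with $s=1$ the "enriched'' structure is vacuous and the zero block of an element of $D_n^{(r,k)}$, once the new top point $m$ is appended, becomes the distinguished block of the corresponding member of $\Pi_m^{r,k+1}$, with the refinement order on both sides agreeing.
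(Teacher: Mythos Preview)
Your proposal is correct and follows essentially the same route as the paper: identify $\Pi_{m}^{r,k+1}$ with $D_{n}^{(r,k)} \cup \{\hz\}$ at $s=1$ via the ``remove $m$ from its block and call that block the zero block'' bijection, then match the generating function from Proposition~\ref{proposition_M} (with $s=1$) against that from Proposition~\ref{proposition_permutations} (with $w=\av^{k-1}$, $q=1$, using $\Des{\av^{rn+k-1}}=1$). The only slip is notational: in two places you write $D_{m}^{(r,k)}$ where you mean $D_{n}^{(r,k)}$.
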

\begin{proof}
Begin to observe that $\Pi_{m}^{r,k+1}$ is
isomorphic to the poset $D^{(r,k)}_{n}$ when $s=1$.
Namely, remove the element $m$ from the block $B$ that contains
this element and rename this block to be the zero block.
The result follows now by observing
that setting $w = \av^{k-1}$ and $q=1$ in
Proposition~\ref{proposition_permutations}
gives the same generating function as
setting $s=1$ in
Proposition~\ref{proposition_M}.
\end{proof}

For completeness, we also consider the case $j=1$.
\begin{theorem}
Let $r$ and $n$ be positive integers and let $m = r n + 1$.
Then the M\"obius function of
the lattice $\Pi_{m}^{r,1}$
is $0$.
\label{theorem_j_1}
\end{theorem}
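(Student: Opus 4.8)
The plan is to identify $\Pi_m^{r,1}$, for $m = rn+1$, with the degenerate instance $k = 0$ of the isomorphism used in the proof of Theorem~\ref{theorem_m_plus_1}, and then to extract the M\"obius number from Proposition~\ref{proposition_M}. First I would write down that isomorphism explicitly for $k = 0$: given a partition $\pi \in \Pi_m^{r,1}$, delete the element $m$ from the block $B$ that contains it and declare $B \setminus \{m\}$ (which is allowed to be empty) to be the zero block. This gives a bijection onto the elements of $D_n^{(r,0)}$ in the Dowling structure ${\bf D} = (L_0, L_1, \ldots)$ with group order $s = 1$, and it is order-preserving in both directions, so it extends to an isomorphism $\Pi_m^{r,1} \cong D_n^{(r,0)} \cup \{\hz\}$ of lattices with their adjoined minima. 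What one must check is that the defining condition of $\Pi_m^{r,1}$, that every block not containing $m$ has size divisible by $r$, is exactly the type constraint of $D_n^{(r,0)}$ from Example~\ref{example_r_k}, namely $a_i = 0$ unless $r \mid i$; here the residue condition $b \equiv 0 \bmod r$ is automatic, since $rn$ minus a sum of multiples of $r$ is again divisible by $r$, and correspondingly the block of $\Pi_m^{r,1}$ containing $m$ automatically has size $\equiv 1 \bmod r$, so no information is lost.

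Having set up the isomorphism, I would specialize Proposition~\ref{proposition_M} to $k = 0$ and $s = 1$. Its right-hand side then reads
$$\left(\sum_{n \geq 0} \frac{x^{rn}}{(rn)!}\right) \cdot \left(\sum_{n \geq 0} \frac{x^{rn}}{(rn)!}\right)^{-1},$$
which is simply the constant $1$. Comparing coefficients of $x^{rn}/(rn)!$ therefore forces $\mu\bigl(D_n^{(r,0)} \cup \{\hz\}\bigr) = 0$ for every $n \geq 1$, and combining this with the isomorphism of the previous paragraph gives $\mu(\Pi_m^{r,1}) = 0$ for $m = rn+1$, $n \geq 1$, which is the assertion. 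Note that one never needs to pin down the $n = 0$ coefficient, where a boundary subtlety lives; the theorem concerns $n \geq 1$ only.

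There is no deep obstacle here, as the case $j = 1$ is genuinely degenerate; the one point requiring care is the bookkeeping at $k = 0$, i.e. checking that Example~\ref{example_r_k} and Proposition~\ref{proposition_M} apply verbatim when $k = 0$, and that the element-deletion map stays an isomorphism when the zero block is permitted to be empty (which is exactly the behaviour at the atoms of $\Pi_m^{r,1}$, where the block containing $m$ is the singleton $\{m\}$). As a self-contained alternative that avoids the Dowling machinery, one can instead invoke Rota's crosscut theorem: since $\Pi_m^{r,1}$ is a lattice and every atom of $\Pi_m^{r,1}$ has $\{m\}$ as a block --- a block containing $m$ that is strictly larger than $\{m\}$ has size at least $r+1$ and may be refined by splitting off $m$ --- the singleton $\{m\}$ remains a block in the join of any family of atoms, so no family of atoms has join $\ho$, whence $\mu(\hz, \ho) = 0$.
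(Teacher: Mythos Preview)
Your proposal is correct and in fact reproduces both arguments the paper gives: the paper first notes that the result follows from Proposition~\ref{proposition_M} with $k=0$ and $s=1$ (your generating-function computation), and then gives the same combinatorial observation that every atom has $\{m\}$ as a singleton block so the join of the atoms is not $\ho$, citing Corollary~3.9.5 in Stanley's \emph{Enumerative Combinatorics~I} where you invoke the crosscut theorem. The content is identical.
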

\begin{proof}
This follows directly from
Proposition~\ref{proposition_M}
by setting $k=0$ and $s=1$.
A direct combinatorial argument is the following.
Each of the atoms of the lattice $\Pi_{m}^{r,1}$
has the element $m$ in a singleton block.
The same holds for the join of all
the atoms and hence the join of all
the atoms is not the maximal element $\ho$ of the lattice.
Thus by Corollary~3.9.5 in~\cite{Stanley_EC_I} the result is obtained.
\end{proof}

Setting $k = r-1$ in Theorem~\ref{theorem_m_plus_1},
we obtain the following corollary due
to Stanley~\cite{Stanley_e_s}.
\begin{corollary}
For $r \geq 2$ and $m = r n$
the M\"obius function of
the $r$-divisible partition lattice $\Pi_{m}^{r}$
is given by
the sign $(-1)^{n-1}$
times the number of
permutations of $r n - 1$ elements
with the descent set
$\{r, 2 r, \ldots, (n-1) r\}$,
that is,
$$      \mu(\Pi_{m}^{r})
    =
        (-1)^{n-1}
      \cdot
        \Des{(\av^{r-1} \bv)^{n-1} \cdot \av^{r-2}}  .  $$
\end{corollary}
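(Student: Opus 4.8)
The plan is to obtain the corollary as the specialization $k = r-1$ of Theorem~\ref{theorem_m_plus_1}, after checking that the extended lattice appearing there is nothing but the $r$-divisible partition lattice. Since $r \geq 2$, the integer $k = r-1$ is positive, so Theorem~\ref{theorem_m_plus_1} applies and gives, for every non-negative integer $n'$, $\mu(\Pi_{rn'+r}^{r,r}) = (-1)^{n'}\cdot\Des{(\av^{r-1}\bv)^{n'}\cdot\av^{r-2}}$. Putting $m = rn$ and $n' = n-1$ (so $rn'+r = rn = m$) turns this into $\mu(\Pi_{m}^{r,r}) = (-1)^{n-1}\cdot\Des{(\av^{r-1}\bv)^{n-1}\cdot\av^{r-2}}$, which is the asserted identity once we know $\Pi_{m}^{r,r} = \Pi_{m}^{r}$. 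The permutation statement then follows by inspection: the $\av\bv$-word $(\av^{r-1}\bv)^{n-1}\cdot\av^{r-2}$ has degree $rn-2$ and its $\bv$'s sit exactly in the positions $r, 2r, \ldots, (n-1)r$, so $\Des{(\av^{r-1}\bv)^{n-1}\cdot\av^{r-2}}$ is the number of permutations in $\Ssss_{rn-1}$ with descent set $\{r, 2r, \ldots, (n-1)r\}$.

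The one step needing an argument is the identification $\Pi_{m}^{r,r} = \Pi_{m}^{r}$ when $r \mid m$. In $\Pi_m^{r,r}$ the block $B$ containing the element $m$ is only required to satisfy $|B| \geq r$, while every other block has size divisible by $r$; but the block sizes sum to $m = rn$, so $|B| = m - \sum_{C \neq B}|C|$ is a difference of multiples of $r$, hence itself divisible by $r$, and being $\geq r$ it is in fact a positive multiple of $r$. Thus every block of every partition in $\Pi_m^{r,r}$ has size divisible by $r$, and conversely an arbitrary $r$-divisible partition obviously satisfies conditions (i) and (ii) defining $\Pi_m^{r,r}$; since both posets carry the same adjoined minimal element $\hz$, they coincide. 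This collapse of the ``$|B|\geq r$'' requirement into genuine $r$-divisibility is special to the combination $j=r$ and $r\mid m$.

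I do not anticipate a real obstacle: all the enumerative and generating-function work is already packaged inside Theorem~\ref{theorem_m_plus_1}, which itself rests on Propositions~\ref{proposition_M} and~\ref{proposition_permutations}. What has to be done carefully is only the bookkeeping — the shift $k = r-1$, the reindexing $n' = n-1$, and keeping track of the resulting sign $(-1)^{n-1}$ — together with the short verification that $\Pi_m^{r,r}$ really is the $r$-divisible partition lattice $\Pi_m^r$.
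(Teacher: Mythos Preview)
Your proposal is correct and follows exactly the paper's approach: the paper derives the corollary simply by ``setting $k = r-1$ in Theorem~\ref{theorem_m_plus_1}'' without further comment. You have additionally spelled out the reindexing $n' = n-1$ and the identification $\Pi_{m}^{r,r} = \Pi_{m}^{r}$, which the paper leaves implicit.
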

When $r=2$ this corollary reduces to
$(-1)^{n-1} \cdot E_{2n-1}$, where $E_{i}$ denotes
the $i$th Euler number.
This result is originally due to
G.\ S.\ Sylvester~\cite{Sylvester}.
The odd indexed Euler numbers are known as
the {\em tangent numbers}
and the even indexed ones as
the {\em secant numbers}.
Setting $r=2$ and $k=2$
in Theorem~\ref{theorem_m_plus_1}
we obtain 
that the M\"obius function of the
partitions where all blocks have even size
except the block containing the largest element,
which has an odd size greater than or equal to three,
is given by 
the secant numbers,
that is, $(-1)^{n-1} \cdot E_{2n}$.

\section{EL-labeling}
\label{section_EL}
\setcounter{equation}{0}

It is a natural question
to ask if the poset $\Pi_{m}^{r,j}$ occurring
in Theorems~\ref{theorem_m_plus_1}
and~\ref{theorem_j_1} is
$EL$-shellable. The answer is positive.
An $EL$-labeling that works is the one using Wachs'
$EL$-labeling~\cite{Wachs_1} for the $r$-divisible
partition lattice $\Pi_{m}^{r}$,
which we state here for the extended
partition lattice $\Pi_{m}^{r,j}$.
Let $r$ and $j$ be positive integers and $n$ a non-negative 
integer and let $m = r n + j$.
Define the labeling $\lambda$ as follows.
First consider the edges in the Hasse diagram not adjacent to
the minimal element $\hz$.
Let $x$ and $y$ be two elements in $\Pi_{m}^{r,k+1} - \{\hz\}$ such
that $x$ is covered by $y$ and $B_{1}$ and $B_{2}$ are the blocks of
$x$ that are merged to form the partition $y$.
Assume that $\max(B_{1}) < \max(B_{2})$.
Set
\begin{equation}
   \lambda(x,y)
  =
   \left\{
     \begin{array}{c l}
        -\max(B_{1}) & \mbox{ if } \max(B_{1}) > \min(B_{2}), \\
        \max(B_{2})  & \mbox{ otherwise.}
      \end{array} 
   \right.
\label{equation_lambda_1}
\end{equation}
Now consider the edges between the minimal element
$\hz$ and the atoms. There are
$M = (m-1)!/(n! \cdot r!^{n} \cdot (j-1)!)$
number of atoms.
For each atom $a = \{B_{1}, B_{2}, \ldots, B_{n+1}\}$
order the blocks such that $\min(B_{1}) < \min(B_{2}) < 
\cdots < \min(B_{n+1})$.
Let $\widetilde{a}$ be the permutation in
$\Ssss_{m}$ that is obtained by going through the blocks
in order and writing down the elements
in each block in increasing order.
For instance, for the atom $a = 16|23|459|78$ we obtain
the permutation $\widetilde{a} = 162345978$.
It is straightforward to see that different atoms give rise
to different permutations by considering where the largest
element $m$ is.
Finally, order the atoms $a_{1} < a_{2} < \cdots < a_{M}$
such that the permutations
$\widetilde{a_{1}} < \widetilde{a_{2}} < \cdots < \widetilde{a_{M}}$
are ordered in lexicographic order.
Define the label of the edge from the minimal element to an atom
by
\begin{equation}
    \lambda(\hz,a_{i})
  =
    0_{i} 
\label{equation_lambda_2}
\end{equation}
Order the labels by
$$  \{-m < -(m-1) < \cdots < -1
    < 0_{1} < 0_{2} < \cdots < 0_{M} <
    1 < \cdots < m \}  .  $$

Let $A_{m}^{r,j}$ be the collection
of all permutations $\sigma \in \Ssss_{m}$ such
that the descent set of $\sigma$ is
$\{r, 2r, \ldots, n r\}$ and $\sigma(m) = m$.
Note that when $j=1$ there are no such permutations
since the condition $\sigma(m) = m$ forces
$n r$ to be an ascent. Given
a permutation $\sigma \in A_{m}^{r,j}$,
let $t_{1}, \ldots, t_{n}$ be
the permutation of $1, \ldots, n$ such that
$$ \sigma(r t_{1}) > 
   \sigma(r t_{2}) > 
         \cdots    >
   \sigma(r t_{n})      .  $$
Define the maximal chain $f_{\sigma}$
in $\Pi_{m}^{r,j}$ whose $i$-block
partition is obtained by splitting $\sigma$
at $r t_{1}$, $r t_{2}$, $\ldots$, $r t_{i-1}$.
As an example, for $\sigma = 562418379$
where $r = 2$, $n=3$, $j=3$ and $m=9$, we have
the maximal chain
$$  f_{562418379}
  =
  \{\hz <
    56|24|18|379 <
    56|2418|379 <
    562418|379 <
    562418379 = \ho\} .  $$
Observe that different permutations
in $A_{m}^{r,j}$ give different maximal chains.

\begin{theorem}
The labeling 
$(\lambda(x,y), -\rho(x))$
where $\lambda$ is defined in equations~(\ref{equation_lambda_1})
and~(\ref{equation_lambda_2}),
$\rho$ denotes the rank function and
the ordering is lexicographic on the pairs,
is an $EL$-labeling for the poset
$\Pi_{m}^{r,j}$.
The falling maximal chains are given by
$\{ f_{\sigma} \: : \: \sigma \in A_{m}^{r,j}\}$.
\end{theorem}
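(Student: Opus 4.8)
The plan is to verify the two defining properties of an $EL$-labeling: that every interval $[x,y]$ in $\Pi_{m}^{r,j}$ has a unique rising maximal chain, and that this rising chain is lexicographically first among all maximal chains of $[x,y]$. Since the structure of the labeling differs on the bottom edges (those adjacent to $\hz$) versus the edges in the ``interior'' (which live inside the upper filter $\Pi_{m}^{r,j}-\{\hz\}$, a filter of the ordinary partition lattice $\Pi_m$), I would split the analysis into three cases: intervals $[x,y]$ with $x\neq\hz$, the full intervals $[\hz,y]$, and note the trivial atom intervals $[\hz,a_i]$. For the first case, the key observation is that $\Pi_{m}^{r,j}-\{\hz\}$ is a filter of $\Pi_m$, and the edge labels $\lambda(x,y)$ in~(\ref{equation_lambda_1}) are precisely Wachs' labels restricted from a suitable labeling of $\Pi_m$; so one can essentially quote Wachs' argument~\cite{Wachs_1} that her labeling restricted to $\Pi_m^r$ is an $EL$-labeling, after checking that the restriction to $\Pi_m^{r,j}$ inherits the needed properties (the only new feature is the special block containing $m$, which only ever grows and is treated as ``block $B_2$'' in the merges because it has the largest maximum).

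The heart of the matter is the second case, the intervals $[\hz,y]$, and in particular $[\hz,\ho]$. Here a maximal chain consists of one bottom edge $\lambda(\hz,a_i)=0_i$ followed by a sequence of interior edges. A chain is rising exactly when $0_i$ is followed by a rising sequence of interior labels, i.e.\ the interior portion (from the atom $a_i$ up to $y$) must itself be the unique rising chain in $[a_i,y]$, which by the first case exists and is unique. So rising chains of $[\hz,y]$ correspond bijectively to atoms $a_i\le y$ whose ``Wachs-rising chain'' to $y$ starts with a label $\geq 0_i$ in the total order; since every interior label is positive or of the form $-\max(B_1)<0$, and $0_i$ precedes all positive labels but we must rule out the case where the first interior label is negative. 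The combinatorial content is that there is exactly one atom $a_i\le y$ for which the rising chain from $a_i$ to $y$ has all-positive labels — and moreover, that this $a_i$ is forced, because running Wachs' rising-chain algorithm backward from $y$ to an atom determines the atom uniquely, and the positivity of the first label corresponds exactly to the atom being the one whose associated permutation $\widetilde{a_i}$ is lexicographically extremal in the right direction among atoms below $y$. I would make this precise by showing the rising chain in $[\hz,\ho]$ reads off, in reverse, a specific merging order, and then identify the resulting falling chains.

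Next I would verify the claim about falling chains. A maximal chain is falling iff every consecutive pair of labels is non-rising. By the tiebreaker $-\rho(x)$ in the pair $(\lambda(x,y),-\rho(x))$, a chain that is "flat" in the $\lambda$-coordinate across a step is still counted appropriately; but since all $\lambda$-labels along an interior chain are distinct integers (each $\pm$ a distinct block-maximum) and the bottom label $0_i$ is distinct from all of them, genuine equality of first coordinates does not occur on interior edges, so falling simply means the $\lambda$-sequence is strictly decreasing after the initial $0_i$. I would then show that decreasing interior label sequences from a given atom correspond exactly to merging the blocks in the order dictated by decreasing block-maxima, and that compatibility with the descent-set condition $\{r,2r,\dots,nr\}$ together with $\sigma(m)=m$ yields precisely the permutations $\sigma\in A_m^{r,j}$ via the map $\sigma\mapsto f_\sigma$ described before the theorem. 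The injectivity of $\sigma\mapsto f_\sigma$ is already noted in the excerpt; surjectivity onto falling chains is the remaining check. Finally, as a consistency check, the number of falling chains must equal $|\mu(\Pi_m^{r,j})|$, which by Theorem~\ref{theorem_m_plus_1} is $\Des{(\av^{r-1}\bv)^n\cdot\av^{k-1}}=|A_m^{r,k+1}|$, confirming the count.

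The main obstacle I anticipate is the second case: proving uniqueness of the rising chain in intervals $[\hz,y]$, because this is where the nonstandard part of the labeling (the $0_i$ labels and their lexicographic ordering via the permutations $\widetilde{a_i}$) interacts with Wachs' interior labeling. One must show that among all atoms $a_i\le y$, exactly one yields an all-rising continuation, and that the chosen linear order on the $0_i$'s is exactly what is needed to make the first coordinate also lexicographically minimal. Establishing this requires carefully tracking how the rising chain from an atom to $y$ depends on the atom, and showing the "correct" atom is singled out by the requirement that the first merge use a positive label $\max(B_2)$ rather than a negative one $-\max(B_1)$ — equivalently, by the minimum-elements condition built into the ordering of $\widetilde{a_i}$. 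I expect Wachs' original analysis to provide the template, with the bulk of the new work being bookkeeping to confirm the extension to the block containing $m$ and to the $\hz$-edges behaves as claimed.
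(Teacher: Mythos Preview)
Your proposal is correct and is exactly the approach the paper has in mind: the paper does not actually give a proof, stating only that ``the proof that this labeling is an $EL$-labeling mimics the proof of Theorem~5.2 in Wachs' paper~\cite{Wachs_1} and hence is omitted.'' Your outline---handling intervals not containing $\hz$ by restricting Wachs' argument to the filter $\Pi_m^{r,j}-\{\hz\}$, and then treating intervals $[\hz,y]$ by analyzing how the atom labels $0_i$ interact with the unique rising continuation above each atom---is precisely how one would carry out that mimicry, and your identification of the falling chains with $\{f_\sigma : \sigma\in A_m^{r,j}\}$ follows the same bookkeeping as in Wachs' original.
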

The proof that this labeling is an $EL$-labeling mimics the
proof of Theorem~5.2 in Wachs' paper~\cite{Wachs_1} and hence is omitted.

We distinguish between the cases $j=1$ and $j \geq 2$
in the following two corollaries.
\begin{corollary}
The chain complex of $\Pi_{m}^{r,1}$
is contractible.
\end{corollary}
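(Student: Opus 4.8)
The plan is to read off contractibility directly from the $EL$-shellability just established, with no extra work. First I would recall the standard topological consequence of $EL$-shellability due to Bj\"orner and Wachs: if a bounded graded poset $P$ admits an $EL$-labeling, then the order complex of its proper part $P - \{\hz,\ho\}$ is homotopy equivalent to a wedge of spheres, with one sphere of dimension $\rho(\hz,\ho)-2$ for each strictly decreasing (falling) maximal chain of $P$. In particular, when $P$ has no falling maximal chains, the lexicographic shelling induced by the labeling has no homology facets, so the order complex is collapsible and hence contractible.

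Next I would invoke the preceding theorem, which identifies the falling maximal chains of $\Pi_{m}^{r,j}$ with the chains $f_{\sigma}$ for $\sigma \in A_{m}^{r,j}$, and then observe that $A_{m}^{r,1}$ is empty. Indeed, a permutation $\sigma \in \Ssss_{m}$ lies in $A_{m}^{r,1}$ only if its descent set is $\{r, 2r, \ldots, nr\}$ and $\sigma(m) = m$; but with $m = rn+1$ the condition $\sigma(m) = m$ forces position $nr$ to be an ascent, contradicting $nr$ being a descent. This is precisely the remark made when $A_{m}^{r,j}$ was introduced. Hence $\Pi_{m}^{r,1}$ has no falling maximal chains, and the conclusion follows from the previous paragraph.

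There is essentially no obstacle here; the statement is a formal consequence of the shelling. The only point worth flagging is that it genuinely uses the $EL$-shelling and not merely Theorem~\ref{theorem_j_1}: knowing $\mu(\Pi_{m}^{r,1}) = 0$ yields only that the reduced Euler characteristic of the order complex vanishes, whereas the absence of falling chains in an $EL$-shelling gives the stronger statement that the complex is contractible. If one wished to be self-contained, one could instead argue directly that a shelling order of a simplicial complex in which no facet is attached along its entire boundary exhibits the complex as collapsible, and apply this to the shelling of the order complex of $\Pi_{m}^{r,1}$ induced by $\lambda$.
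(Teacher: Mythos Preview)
Your argument is correct and is exactly the intended one: the corollary is an immediate consequence of the preceding $EL$-shelling theorem together with the observation (already made in the paper when $A_{m}^{r,j}$ was defined) that $A_{m}^{r,1}=\emptyset$, so there are no falling chains and the order complex is contractible by the standard Bj\"orner--Wachs machinery. Your side remark distinguishing this from the weaker conclusion available from Theorem~\ref{theorem_j_1} alone is also apt.
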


\begin{corollary}
The chain complex of $\Pi_{m}^{r,j}$
is homotopy equivalent to a wedge of 
$\Des{(\av^{r-1} \bv)^{n} \cdot \av^{j-2}}$ number of
$(n-1)$-dimensional spheres.
Hence all the poset homology of the
poset
$\Pi_{m}^{r,j}$
is concentrated in the top homology
which has rank
$\Des{(\av^{r-1} \bv)^{n} \cdot \av^{j-2}}$
\end{corollary}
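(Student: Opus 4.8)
The plan is to read the topology straight off the $EL$-labeling supplied by the previous theorem, using the standard principle that an $EL$-labeling of a bounded poset $P$ of length $\ell$ induces a shelling of the order complex $\Delta(\overline{P})$ of the proper part $\overline{P} = P - \{\hz,\ho\}$, and that when $P$ is graded this complex is pure of dimension $\ell-2$, hence homotopy equivalent to a wedge of $(\ell-2)$-spheres with exactly one sphere for each strictly decreasing (falling) maximal chain of $P$. By the preceding theorem $\Pi_{m}^{r,j}$ is $EL$-shellable and its falling maximal chains are exactly $\{f_{\sigma} \: : \: \sigma \in A_{m}^{r,j}\}$, these being pairwise distinct; so the whole statement reduces to computing $\ell$ and the cardinality $|A_{m}^{r,j}|$.

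First I would pin down the length. A maximal chain of $\Pi_{m}^{r,j}$ runs $\hz \prec a \prec \cdots \prec \ho$, where the atom $a$ is a partition into $n+1$ blocks (one block of size $j$ containing $m$, and $n$ blocks of size $r$), and every subsequent cover relation merges two blocks; passing from $n+1$ blocks down to the single block $\ho$ takes $n$ merges. Hence every maximal chain has $n+1$ cover relations, so $\ell = n+1$, the poset is graded, and the spheres have dimension $\ell - 2 = n-1$. Equivalently, $\overline{\Pi_{m}^{r,j}}$ consists of the partitions of rank $1$ through $n$, so $\Delta(\overline{\Pi_{m}^{r,j}})$ has dimension $n-1$.

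Next I would count $A_{m}^{r,j}$. A permutation $\sigma \in \Ssss_{m}$ with $\sigma(m)=m$ is determined by the word $\sigma(1)\sigma(2)\cdots\sigma(m-1)$, which is an arbitrary permutation of $\{1,\ldots,m-1\}$; since $m$ is the largest letter, position $m-1$ is automatically an ascent of $\sigma$, which is consistent with the prescribed descent set $\{r,2r,\ldots,nr\}$ precisely because $nr < m-1 = rn+j-1$ when $j \geq 2$. Therefore $|A_{m}^{r,j}|$ equals the number of permutations in $\Ssss_{m-1}$ with descent set $\{r,2r,\ldots,nr\}$, whose descent word has length $m-2 = rn+j-2$ and equals $(\av^{r-1}\bv)^{n}\cdot\av^{j-2}$; thus $|A_{m}^{r,j}| = \Des{(\av^{r-1}\bv)^{n}\cdot\av^{j-2}}$, which is the asserted number of $(n-1)$-spheres. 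The final "hence" is then immediate: a wedge of $(n-1)$-spheres has reduced homology concentrated in degree $n-1$ and free of rank equal to the number of spheres, and taking reduced Euler characteristics recovers the value of $\mu(\Pi_{m}^{r,j})$ from Theorem~\ref{theorem_m_plus_1}. The only delicate point, and the step I would double-check most carefully, is the simultaneous bookkeeping that translates the two defining constraints of $A_{m}^{r,j}$ — the descent set $\{r,\ldots,nr\}$ together with $\sigma(m)=m$ — into exactly the $\ab$-word $(\av^{r-1}\bv)^{n}\cdot\av^{j-2}$ on $m-2$ letters, and the parallel verification that the sphere dimension is $n-1$ and not $n$.
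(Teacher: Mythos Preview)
Your argument is correct and is exactly the standard deduction the paper has in mind: the corollary is stated without proof because it follows immediately from the $EL$-labeling theorem via Bj\"orner--Wachs, and your write-up spells out precisely that inference (length $n+1$, hence $(n-1)$-spheres; falling chains indexed by $A_{m}^{r,j}$, whose cardinality is $\Des{(\av^{r-1}\bv)^{n}\cdot\av^{j-2}}$ via the bijection $\sigma \mapsto \sigma|_{\{1,\ldots,m-1\}}$). There is nothing to add.
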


\section{Concluding remarks}
\label{section_concluding_remarks}
\setcounter{equation}{0}

Can more examples of exponential Dowling structures be given? For
instance, find the Dowling extension of counting matrices with
non-negative integer entries having a fixed row and column sum.
See~\cite[Chapter~5]{Stanley_EC_II}.

Theorem~\ref{theorem_m_plus_1}
has been generalized
in~\cite{Ehrenborg_Readdy_restricted}.
As we have seen in
this theorem
the generating function
for the M\"obius function
of $D_{n}^{(r,k)} \cup \{\hz\}$
in
Proposition~\ref{proposition_M}
in the case when the order $s$ is equal to $1$
has a permutation enumeration analogue.
It would be interesting to find a permutation
interpretation for this generating function
for general values of the order $s$.
Similar generating functions
have appeared when enumerating classes
of $r$-signed permutations.
A few examples are
$(\sin(px) + \cos((r-p)x)/\cos(rx)$
counting
$p$-augmented $r$-signed permutations
in~\cite{Ehrenborg_Readdy_Sheffer},
$\sqrt[r]{1/(1 - \sin(rx))}$
counting
augmented Andr\'e $r$-signed permutations
in~\cite{Ehrenborg_Readdy_r-cubical},
and
$\sqrt[r]{1/(1 - rx)}$
counting
$r$-multipermutations
in~\cite{Park}.

There are several other questions to raise.
Is there a $q$-analogue of the partition lattice
such that a natural $q$-analogue of
Theorem~\ref{theorem_m_plus_1} also holds?
We only use the case $w = \av^{k-1}$ in
Proposition~\ref{proposition_permutations}.
Are there other poset statistics that correspond to
other $\ab$-words $w$?

The symmetric group $\Ssss_{m-1}$ acts naturally on
the lattice $\Pi_{m}^{r,j}$. Hence it also acts
on the top homology group of
$\Pi_{m}^{r,j}$. 
In a forthcoming paper we study
the representation of this $\Ssss_{m-1}$ action.

Similar questions arise concerning the poset
$D_{n}^{(r,k)} \cup \{\hz\}$;
see Proposition~\ref{proposition_M}.
Is this poset shellable?
Is the homology of
this poset
concentrated in the top homology?
Note that the wreath product
$G \wr \Ssss_{n}$ acts on
the Dowling lattice $L_{n}(G) = L_{n}$.
Hence $G \wr \Ssss_{n}$ acts on
the exponential Dowling structure
$D_{n}^{(r,k)} \cup \{\hz\}$.
What can be said about the action of
the wreath product
$G \wr \Ssss_{n}$
on
the homology group(s) of
$D_{n}^{(r,k)} \cup \{\hz\}$?

\section*{Acknowledgements}

The first author was partially supported by National Science
Foundation grant 0200624. Both authors thank the
Mittag-Leffler Institute where a portion of this research was completed
during the Spring 2005 program in Algebraic Combinatorics.
The authors also thank the referee for suggesting
additional references.

%%%%%%%%%%%%%%%%%%%%%%%%%%%%%%%%%%%%%%%%%%%%%%%%%%%%%%%

\newcommand{\journal}[6]{{\sc #1,} #2, {\it #3} {\bf #4} (#5), #6.}
\newcommand{\book}[4]{{\sc #1,} ``#2,'' #3, #4.}
\newcommand{\bookf}[5]{{\sc #1,} ``#2,'' #3, #4, #5.}
\newcommand{\thesis}[4]{{\sc #1,} ``#2,'' Doctoral dissertation, #3, #4.}
\newcommand{\springer}[4]{{\sc #1,} ``#2,'' Lecture Notes in Math.,
                          Vol.\ #3, Springer-Verlag, Berlin, #4.}
\newcommand{\preprint}[3]{{\sc #1,} #2, preprint #3.}
\newcommand{\preparation}[2]{{\sc #1,} #2, in preparation.}
\newcommand{\appear}[3]{{\sc #1,} #2, to appear in {\it #3}}
\newcommand{\submitted}[4]{{\sc #1,} #2, submitted to {\it #3}, #4.}
\newcommand{\JCTA}{J.\ Combin.\ Theory Ser.\ A}
\newcommand{\JCTB}{J.\ Combin.\ Theory Ser.\ B}
\newcommand{\AdvancesinMathematics}{Adv.\ Math.}
\newcommand{\JournalofAlgebraicCombinatorics}{J.\ Algebraic Combin.}

\newcommand{\communication}[1]{{\sc #1,} personal communication.}

{\small

}

%%%%%%%%%%%%%%%%%%%%%%%%%%%%%%%%%%%%%%%%%%%%%%%%%%%%%%%

\vspace*{10 mm}

{\em
\noindent
R.\ Ehrenborg,
Department of Mathematics,
University of Kentucky,
Lexington, KY 40506 \newline
\noindent
M.\ Readdy,
Department of Mathematics,
University of Kentucky,
Lexington, KY 40506 \newline
{\tt jrge@ms.uky.edu},
{\tt readdy@ms.uky.edu}
}

\end{document}